\newtheorem{thm}{Theorem}[section]
\newtheorem{lem}[thm]{Lemma}
\newtheorem{cor}[thm]{Corollary}
\newtheorem{conj}[thm]{Conjecture}
\def\ZZ{{\mathbb Z}}
\def\des{{\rm des}}
\def\maj{{\rm maj}}
\def\maj{{\rm maj}}
\def\fmaj{{\rm fmaj}}
\newcommand{\sep}{\preceq}
\begin{document}

\begin{center}
{\large \bf Interlacing Polynomials and the Veronese Construction for Rational Formal Power Series}
\end{center}

\begin{center}
Philip B. Zhang\\[6pt]

College of Mathematical Science \\
Tianjin Normal University, Tianjin  300387, P. R. China\\[8pt]

Email: {\tt zhangbiaonk@163.com}
\end{center}

\noindent\textbf{Abstract.}
Fixing a positive integer $r$ and $0 \le k \le r-1$, define $f^{\langle r,k \rangle}$ for every formal power series $f$ as $
 f(x) = f^{\langle r,0 \rangle}(x^r)+xf^{\langle r,1 \rangle}(x^r)+ \cdots +x^{r-1}f^{\langle r,r-1 \rangle}(x^r).$
Jochemko recently showed that the polynomial $U^{n}_{r,k}\, h(x) := \left( (1+x+\cdots+x^{r-1})^{n} h(x) \right)^{\langle r,k \rangle}$
has only nonpositive zeros for any  $r \ge  \deg h(x) -k$ and any positive integer $n$. As a consequence, Jochemko confirmed a conjecture of Beck and Stapledon on the Ehrhart polynomial $h(x)$ of a lattice polytope of dimension $n$, which states that
$U^{n}_{r,0}\,h(x)$ has only negative, real zeros whenever $r\ge n$. In this paper, we provide an alternative approach to Beck and Stapledon's conjecture by proving the following general result:
if the polynomial sequence $\left( h^{\langle r,r-i \rangle}(x)\right)_{1\le i \le r}$ is interlacing, so is  $\left( U^{n}_{r,r-i}\, h(x) \right)_{1\le i \le r}$. Our result has many other interesting applications. In particular, this enables us to give a new proof of Savage and Visontai's result on the interlacing property of some refinements of the descent generating functions for colored permutations.
Besides, we derive a Carlitz identity for refined colored permutations.

\noindent \emph{AMS Classification 2010:} Primary 05A15, 13A02, 26C10; Secondary 52B20, 52B45.

\noindent \emph{Keywords:}  Veronese submodule, Ehrhart polynomials, interlacing polynomials, colored permutation, Carlitz identity.

\section{Introduction}

Suppose that $\{a_i\}_{i\geq 0}$ is a sequence of real numbers that eventually agrees with a polynomial, namely there exist $i_0\geq 0$ and a polynomial $g$ in $i$ of degree $n-1\geq 0$ such that $a_i=g(i)$ for any $i\geq i_0$. It is well known that the generating series $$\sum_{i=0}^{\infty} a_i\, x^{i}$$
converges to a rational function of the form
$$\frac{h(x)}{(1-x)^{n}},$$
where $h(x)$ is a polynomial with $h(1)\neq 0$, see \cite{Stanley2012Enumerative}.
Fix a positive integer $r$. Clearly, for any $0 \le k \le r-1$, the generating series
$$\sum_{i=0}^{\infty} a_{ri+k} \, x^{i}$$
also converges to a rational function of the same form
$$\frac{\bar{h}(x)}{(1-x)^{n}},$$
where $\bar{h}(x)$ is a polynomial with $\bar{h}(1)\neq 0$. As will be shown later, the polynomial $\bar{h}(x)$ is uniquely determined by
$n,r,k$ and $h(x)$. To make this point clear, we denote
$\bar{h}(x)$ by $U^{n}_{r,k}\, h(x)$.
Such a construction originates from the Veronese subalgebra of graded algebra \cite{Beck2010log}.
Given a formal power series $f(x)$, there exist uniquely determined series $f^{\langle r,0 \rangle}(x),\,f^{\langle r,1 \rangle}(x),\,\ldots,\,f^{\langle r,r-1 \rangle}(x)$ such that
\begin{align*}
 f(x) = f^{\langle r,0 \rangle}(x^r)+xf^{\langle r,1 \rangle}(x^r)+ \cdots +x^{r-1}f^{\langle r,r-1 \rangle}(x^r).
\end{align*}
It was shown in \cite[Lemma 3.1]{Jochemkotoappearreal} that
\begin{align}\label{eq:main}
 U_{r,k}^{n}\, h(x)  =  \left((1+x+\cdots+x^{r-1})^{n} h(x)\right)^{\langle r,k \rangle}.
\end{align}

Rational formal power series of the above form $\frac{U_{r,k}^{n}\, h(x)}{(1-x)^{n}}$ have appeared widely in combinatorics \cite{Athanasiadis2014Edgewise,  Beck2015Computing, Brun2005Subdivisions, JochemkotoappearCombinatorial} and commutative algebra \cite{Bruns1993Cohen, Eisenbud1994Initial, Kubitzke2012Enumerative}. For example, given an $(n-1)$-dimensional lattice polytope $P$, the Ehrhart series of $P$ is
$$\sum_{i\geq 0}E_P(i)x^i=\frac{h^*(P)(x)}{(1-x)^n},$$
where $E_P(i)$ counts the number of lattice points in $iP$ (the $i$-th dilate of $P$), and $h^*(P)(x)$ is the Ehrhart $h^*$-polynomial of $P$ with degree less than $n$. For the $r$-th dilate of $P$, it can be shown that
$$h^*(rP)(x)=U_{r,0}^{n}\, h^*(P)(x).$$
Recently, there is an arising interest in the study of the real-rootedness of $U_{r,k}^{n}\, h(x)$ when the numerator polynomial $h(x)$ has only nonnegative coefficients.  Brenti and Welker \cite{Brenti2009Veronese} proved that, for a sufficiently large integer $r$, the polynomial $U^{n}_{r,0}\,h(x)$ has only distinct, negative, real zeros.
Beck and Stapledon \cite{Beck2010log} showed that, given a positive integer $n$, there exists an $R > 0$  such that
if $h(x)$ is a polynomial of degree less than $n$ with nonnegative integer coefficients and $h(0)=1$, then
the polynomial $U^{n}_{r,0}\,h(x)$ has only distinct, negative, real zeros for every integer $r > R$. In the case of Ehrhart polynomials,  Beck and Stapledon proposed the following conjecture.

\begin{conj}[{\cite[Conjecture 5.1]{Beck2010log}}]\label{conj-Beck}
Let $h(x)$ be the Ehrhart polynomial of a lattice polytope of dimension $n$. Then $U^{n}_{r,0}\,h(x)$ has only real zeros whenever $r\ge n$.
\end{conj}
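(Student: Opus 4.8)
The plan is to derive Conjecture~\ref{conj-Beck} as a quick corollary of the general interlacing theorem announced in the abstract; essentially all that has to be checked is that the hypothesis of that theorem is automatically satisfied by Ehrhart $h^{*}$-polynomials as soon as $r\ge n$.

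First I would set up the reduction. Write $h(x)$ for the Ehrhart $h^{*}$-polynomial of the given lattice polytope; in the normalization of the Introduction this means $h$ has nonnegative integer coefficients, $h(0)=1$, and $\deg h\le n-1$. In the sequence $\bigl(U^{n}_{r,r-i}\,h\bigr)_{1\le i\le r}$ the entry indexed by $i=r$ is exactly $U^{n}_{r,0}\,h$, and every polynomial appearing in an interlacing sequence has only real zeros; hence, by the general theorem, it suffices to show that $\bigl(h^{\langle r,r-i\rangle}(x)\bigr)_{1\le i\le r}$ is an interlacing sequence whenever $r\ge n$.

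The remaining content is short. Since $r\ge n>\deg h$, every exponent occurring in $h(x)=\sum_{j\ge 0}h_{j}x^{j}$ lies in $\{0,1,\dots,r-1\}$, so comparison with the defining expansion $h(x)=\sum_{k=0}^{r-1}x^{k}h^{\langle r,k\rangle}(x^{r})$ forces $h^{\langle r,k\rangle}(x)=h_{k}$ for $0\le k\le\deg h$ and $h^{\langle r,k\rangle}(x)=0$ for $\deg h<k\le r-1$. Thus the entire sequence $\bigl(h^{\langle r,r-i\rangle}\bigr)_{1\le i\le r}$ consists of constant polynomials, and by Stanley's nonnegativity theorem for the $h^{*}$-vector of a lattice polytope these constants are all nonnegative. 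A sequence of nonnegative real constants is interlacing straight from the definition, so the hypothesis of the general theorem is met. Applying the theorem and extracting the $i=r$ entry yields that $U^{n}_{r,0}\,h$ has only real zeros; and since by~\eqref{eq:main} this polynomial is the $\langle r,0\rangle$-part of $(1+x+\cdots+x^{r-1})^{n}h(x)$, it has nonnegative coefficients with constant term $h(0)=1\neq0$, so in fact all of its zeros are negative.

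I expect the only real subtlety to be conventional, not substantive: one has to pin down the precise definition of \emph{interlacing sequence} that is being used --- in particular, the way it handles constant entries and the zero polynomial --- and verify against it that a sequence of nonnegative constants, possibly padded with zeros, qualifies; one should also cite the standard fact that every member of an interlacing sequence is real-rooted. (If one wishes to allow $\deg h=n$, the single remaining case $r=n$ calls for a one-line check: the sequence then terminates in the linear polynomial $h_{0}+h_{n}x$, which has one negative zero and is still interlaced by the constants preceding it.) All of the genuine difficulty is concentrated in the general interlacing theorem itself, which I take as given; relative to it this deduction is bookkeeping. Finally, it is worth pointing out that the argument delivers more than the conjecture asks: the whole Veronese sequence $\bigl(U^{n}_{r,r-i}\,h\bigr)_{1\le i\le r}$ is interlacing for $r\ge n$, not merely real-rooted in its last coordinate.
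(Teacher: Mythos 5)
Your proposal is correct and follows essentially the same route as the paper: Section~\ref{sect-applications} also derives Conjecture~\ref{conj-Beck} by checking that the degree bound makes the sequence $\bigl(h^{\langle r,r-i\rangle}\bigr)_{1\le i\le r}$ trivially interlacing and then invoking Theorem~\ref{thm:main-hnb}. The only cosmetic difference is that the paper packages this as Corollary~\ref{thm:deg_h<=r}, stating the check for any $h$ of degree $\le r$ (allowing $h^{\langle r,0\rangle}$ to be linear, as in the parenthetical case you flagged), whereas you specialize immediately to the Ehrhart $h^{*}$-polynomial with $\deg h<r$ so that every part is a nonnegative constant.
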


The original conjecture of Beck and Stapledon requires that $U^{n}_{r,0}\,h(x)$ should have distinct, negative, real zeros.
Here we only focus on its real-rootedness.
This conjecture was recently proved by Jochemko \cite{Jochemkotoappearreal}, who obtained the following more general result.

\begin{thm}[{\cite[Theorem 1.1]{Jochemkotoappearreal}}]\label{thm:J}
Given positive integers $r,n$ and $0\leq k\leq r-1$, if $h(x)$ is a polynomial of degree $\leq r+k$ with nonnegative coefficients, then the polynomial $U_{r,k}^{n}\,h(x)$ has only real zeros.
\end{thm}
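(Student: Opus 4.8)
The plan is to translate the problem into the language of interlacing sequences of polynomials and to realize $U^{n}_{r,k}h$ as the result of applying one fixed linear operator $n$ times to the ``digit sequence'' $\bigl(h^{\langle r,r-1\rangle},h^{\langle r,r-2\rangle},\dots,h^{\langle r,0\rangle}\bigr)$ of $h$. Recall that a finite sequence $(f_1,\dots,f_m)$ of real-rooted polynomials with nonnegative leading coefficients is \emph{interlacing} if the roots of $f_i$ and $f_j$ alternate (with $\deg f_j\in\{\deg f_i,\deg f_i+1\}$) for all $1\le i\le j\le m$; the key property is that every nonnegative linear combination of an interlacing sequence is again real-rooted. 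Along the way I will use the elementary facts that $f\preceq f+g\preceq g$ whenever $f\preceq g$, that $f\preceq xf$ whenever $f$ has nonnegative coefficients, and that, for polynomials with only nonpositive roots, $f\preceq g$ implies $g\preceq xf$.

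First I would record a recursion. Putting $g=(1+x+\cdots+x^{r-1})^{n}h$, so that $g^{\langle r,l\rangle}=U^{n}_{r,l}h$ by \eqref{eq:main}, and expanding $(1+x+\cdots+x^{r-1})^{n+1}h=(1+x+\cdots+x^{r-1})\cdot g$ one exponent at a time according to its residue modulo $r$, one obtains
\[
 U^{n+1}_{r,k}h(x)\;=\;\sum_{l=0}^{k}U^{n}_{r,l}h(x)\;+\;x\sum_{l=k+1}^{r-1}U^{n}_{r,l}h(x),\qquad 0\le k\le r-1 ,
\]
so that, writing $F^{(n)}_i:=U^{n}_{r,r-i}h$ for $1\le i\le r$, we have $F^{(0)}_i=h^{\langle r,r-i\rangle}$ and
\[
 F^{(n+1)}_i\;=\;\bigl(F^{(n)}_i+F^{(n)}_{i+1}+\cdots+F^{(n)}_r\bigr)\;+\;x\,\bigl(F^{(n)}_1+\cdots+F^{(n)}_{i-1}\bigr).
\]
This step is just bookkeeping and is free.

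The crux — and what I expect to be the main obstacle — is the following lemma on the \emph{staircase operator} $T\colon(f_1,\dots,f_r)\mapsto(g_1,\dots,g_r)$, $g_i=\sum_{j\ge i}f_j+x\sum_{j<i}f_j$: if $(f_1,\dots,f_r)$ is an interlacing sequence of polynomials with nonnegative coefficients, then so is $(g_1,\dots,g_r)$. For consecutive pairs this reduces to the case $r=2$: setting $A_i=f_1+\cdots+f_{i-1}$ and $B_{i+1}=f_{i+1}+\cdots+f_r$, one checks $f_i\preceq B_{i+1}+xA_i$ (since $f_i\preceq B_{i+1}$ and $f_i\preceq xA_i$ by the facts above), and then $(g_i,g_{i+1})$ is exactly the image of the interlacing pair $(f_i,\,B_{i+1}+xA_i)$ under the two-term staircase $(p,q)\mapsto(p+q,\,q+xp)$. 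Upgrading this to mutual interlacing of the whole tuple is the delicate point; it rests on the same circle of ideas as the interlacing lemma of Savage and Visontai for colored permutations, and I would carry it out by induction on $r$ (or by invoking a suitable matrix criterion for preservation of interlacing), tracking carefully at each stage which direction of $\preceq$ holds. Granting the lemma, induction on $n$ applied to the recursion shows that $\bigl(U^{n}_{r,r-i}h\bigr)_{1\le i\le r}$ is interlacing for every $n$, whence each $U^{n}_{r,k}h$ is real-rooted.

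It remains to handle the base case, i.e.\ to see that the digit sequence $\bigl(h^{\langle r,r-i\rangle}\bigr)_{1\le i\le r}$ is interlacing; this is the only place where the degree hypothesis on $h$ is used. Writing $h=\sum_j a_j x^j$ with $a_j\ge 0$, the component $h^{\langle r,l\rangle}$ collects $a_l,a_{r+l},a_{2r+l},\dots$, so $\deg h^{\langle r,l\rangle}=\lfloor(\deg h-l)/r\rfloor$; the bound on $\deg h$ forces each component to have degree at most one with nonnegative coefficients, and all but boundedly many of them to be nonnegative constants. A short case analysis then verifies that, read in the order $i=1,\dots,r$, these components form an interlacing sequence — the point needing attention being that the degree-zero entries must precede the degree-one entries consistently with $\preceq$ and that the degree-one entries occur in the correct order (this is immediate when $\deg h<r$, where every component is a constant). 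With the base case settled, the theorem follows from the previous step.
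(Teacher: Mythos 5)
Your plan—iterate a ``staircase'' operator on the digit sequence of $h$ and invoke an interlacing-preservation lemma—is precisely the proof of the paper's main result, Theorem~\ref{thm:main-hnb}: the operator you call $T$ is exactly the one appearing in Fisk's Lemma~\ref{lem:fg} (your recursion is equivalent to the identity $(1+x+\cdots+x^{r-1})\sum_{l}x^{l}\,U^{n}_{r,l}h(x^{r})=\sum_{l}x^{l}\,U^{n+1}_{r,l}h(x^{r})$), and the conclusion that the interlacing of $\bigl(h^{\langle r,r-i\rangle}\bigr)_{i=1}^{r}$ propagates to $\bigl(U^{n}_{r,r-i}h\bigr)_{i=1}^{r}$ is exactly what that theorem asserts. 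But this machinery alone does \emph{not} yield Theorem~\ref{thm:J} in full, and the paper makes no such claim: Theorem~\ref{thm:J} is cited from Jochemko, whose argument instead expands $U^{n}_{r,k}h$ as a nonnegative polynomial combination of the mutually interlacing family $\bigl(U^{n}_{r,j}I\bigr)_{j}$ via~\eqref{eq:uri} and Lemma~\ref{lem:a}, thereby avoiding any hypothesis on the digit sequence of $h$ itself. The paper's own deduction from Theorem~\ref{thm:main-hnb} is confined to the $k=0$ case, Corollary~\ref{thm:deg_h<=r}.

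The gap in your proposal is the base case. You claim that $\deg h\le r+k$ forces $\bigl(h^{\langle r,r-1\rangle},\dots,h^{\langle r,0\rangle}\bigr)$ to be interlacing, with ``a short case analysis''; that fails as soon as $k\ge 1$. Under $\deg h\le r+k$ the components $h^{\langle r,l\rangle}$ with $0\le l\le k$ are linear, $h_{l}+h_{r+l}x$, and for these to interlace in the order $l=k,k-1,\dots,0$ you would need $h_{i}h_{r+j}\le h_{j}h_{r+i}$ for all $0\le i<j\le k$ — a genuine constraint not implied by nonnegativity of the $h_{i}$ (compare Corollary~\ref{thm:deg_h<2r}, where exactly this inequality is imposed as a hypothesis). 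A minimal counterexample is $h(x)=1+x^{3}$ with $r=2$, $k=1$: then $h^{\langle 2,1\rangle}(x)=x$ and $h^{\langle 2,0\rangle}(x)=1$, and $(x,1)$ is not an interlacing sequence, since a degree-one polynomial cannot precede a nonzero constant under $\preceq$. Consequently your induction has no valid starting point for $k\ge 1$, and the argument only establishes the $k=0$ case — precisely the portion the paper proves by this route. For general $k$ you must replace the base-case verification with Jochemko's expansion argument (cf.~\eqref{eq:uri} together with Lemmas~\ref{lem:a} and~\ref{lem:fg_r}).
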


Note that taking $k=0$ in Theorem \ref{thm:J} we obtain Conjecture \ref{conj-Beck}. Jochemko's proof of the above theorem involves the notion of interlacing, which has been  used to prove the real-rootedness of several polynomials arising in  combinatorics (\cite{Jochemkotoappearreal, Leander2016Compatible, Savage2015s, Yang2014Mutual, Zhang2015real}).
Suppose that $f(x)$ and $g(x)$ are  two real-rooted polynomials with positive leading coefficients. We say that {$g(x)$ interlaces $f(x)$}, denoted $g(x)\sep f(x)$, if
$\deg f(x)=\deg g(x)$ or $\deg f(x)=\deg g(x)+1$, and
the zeros of $g(x)$ and $f(x)$
 alternate in the following way:
 \begin{align*}
\cdots\le v_2\le u_2\le v_1\le u_1,
\end{align*}
where $u_i$ and $v_j$ are the zeros of $f(x)$ and $g(x)$, respectively.
For convention, we let $a\sep bx+c$ for any nonnegative $a,b,c$, and let $0 \sep f$ and $f \sep 0$ for any real-rooted polynomial $f$.
We say that a sequence of real polynomials $(f_{1}(x),\dots,f_{m}(x))$
with positive leading coefficients is {interlacing} if $f_{i}(x) \sep f_{j}(x)$ for all $1\le i<j\le m$.
For more information on interlacing polynomials, we refer the reader to Br{\"a}nd{\'e}n's survey \cite{Braenden2015Unimodality}.

The proof of Theorem \ref{thm:J} relies on the following result due to Fisk \cite{Fisk2006Polynomials}.

\begin{lem}[{\cite[Example 3.76]{Fisk2006Polynomials}}]
\label{lem:a}
Let $I$ denote the constant function $h(x)=1$ and let
$U_{r,k}^{n}\,I$
be defined as in \eqref{eq:main} for positive $n,r$ and nonnegative $0\leq k\leq r-1$.
Then the sequence $\left(U_{r,r-1}^{n}\,I,\ldots, U_{r,1}^{n}\,I, U_{r,0}^{n}\,I\right)$
is interlacing.
\end{lem}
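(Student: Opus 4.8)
\medskip
\noindent\textbf{Proof proposal.}
One quick observation is that Lemma~\ref{lem:a} is the special case $h(x)=1$ of the general result announced in the introduction: since $1^{\langle r,0\rangle}=1$ and $1^{\langle r,j\rangle}=0$ for $1\le j\le r-1$, the input sequence $\bigl(1^{\langle r,r-i\rangle}\bigr)_{1\le i\le r}=(0,\dots,0,1)$ is interlacing by the stated conventions, and the conclusion is precisely the lemma. Since that general statement is the main theorem of the paper, let me instead outline a self-contained proof by induction on $n$, which is also the skeleton of the general argument.

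Write $g(x)=1+x+\cdots+x^{r-1}$ and $F^{(n)}_k:=U^{n}_{r,k}\,I=\bigl(g(x)^n\bigr)^{\langle r,k\rangle}$; every $F^{(n)}_k$ has nonnegative coefficients. Expanding $g(x)^{n+1}=g(x)^n\cdot g(x)$ and, for each residue $0\le m\le r-1$, collecting the monomials $x^k\cdot x^\ell$ with $0\le k,\ell\le r-1$ and $k+\ell\equiv m\pmod r$ --- for each $k$ there is a unique admissible $\ell$, producing a carry of $x^r$ exactly when $k>m$ --- one obtains
\[
F^{(n+1)}_m=\sum_{k=0}^{m}F^{(n)}_k+x\sum_{k=m+1}^{r-1}F^{(n)}_k,\qquad 0\le m\le r-1,
\]
with initial data $F^{(1)}_k=1$ for all $k$ (only the constant term of $g$ survives since $\deg g=r-1<r$). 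The base case $n=1$ gives the sequence $(1,\dots,1)$, interlacing by the convention $a\sep bx+c$. For the inductive step, assume $(F^{(n)}_{r-1},\dots,F^{(n)}_0)$ is interlacing and put $\tilde F_i:=F^{(n)}_{r-i}$, so $(\tilde F_1,\dots,\tilde F_r)$ is an interlacing sequence of polynomials with nonnegative coefficients; reading the recursion with $m=r-1-j$, the sequence $(F^{(n+1)}_{r-1},\dots,F^{(n+1)}_0)$ equals $(H_0,\dots,H_{r-1})$ where
\[
H_j=x(\tilde F_1+\cdots+\tilde F_j)+(\tilde F_{j+1}+\cdots+\tilde F_r),\qquad 0\le j\le r-1 .
\]
So the whole statement reduces to a transformation lemma: if $(\tilde F_1,\dots,\tilde F_r)$ is interlacing with nonnegative coefficients, then so is $(H_0,\dots,H_{r-1})$.

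To prove this I would set $Q_j=\tilde F_1+\cdots+\tilde F_j$ and $R_j=\tilde F_{j+1}+\cdots+\tilde F_r$, so $H_j=xQ_j+R_j$, and feed the following standard facts into one another: a common interlacer of $f_1,f_2$ interlaces every $c_1f_1+c_2f_2$ with $c_i\ge0$; a polynomial interlaced by both $f_1$ and $f_2$ is interlaced by every such combination; and, for $f$ with only nonpositive zeros, $g\sep f$ implies $f\sep xg$ (and $f\sep xf$). These quickly give $Q_j\sep R_{j'}$ for $j\le j'$, that $(Q_j)_j$ and $(R_j)_j$ are interlacing, and relations such as $Q_j\sep H_j$, $R_j\sep H_j$ and $\tilde F_{j+1}\sep H_j,H_{j+1}$. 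The delicate part --- which I expect to be the main obstacle --- is assembling these into $H_j\sep H_{j+1}$, hence into the interlacing of the full sequence: the operation mixes an $x$-scaled block of the $\tilde F_i$ with an unscaled block, and since $f\mapsto xf$ does not respect $\sep$ in general, one cannot argue term by term; the clean way is to peel off $\tilde F_1$ and induct on $r$, using $H_0=\tilde F_1+H'_0$ and $H_j=x\tilde F_1+H'_{j-1}$ for $j\ge1$, where $(H'_0,\dots,H'_{r-2})$ is the output of the lemma applied to $(\tilde F_2,\dots,\tilde F_r)$. This ``staircase'' transformation preserving interlacing is exactly the type of fact developed in Fisk's book and in Br{\"a}nd{\'e}n's survey, and it is the engine behind the Savage--Visontai interlacing theorem for colored permutations mentioned in the introduction.
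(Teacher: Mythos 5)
Your reduction is correct as far as it goes, but the proposal stops exactly where the real work begins, and the piece you leave as the ``transformation lemma'' is precisely the nontrivial statement. Concretely: the recursion
\[
F^{(n+1)}_m=\sum_{k=0}^{m}F^{(n)}_k+x\sum_{k=m+1}^{r-1}F^{(n)}_k
\]
is a correct unravelling of $g^{n+1}=g^n\cdot g$ in the $\langle r,k\rangle$-pieces, and the re-indexed transformation $(\tilde F_1,\dots,\tilde F_r)\mapsto(H_0,\dots,H_{r-1})$ with $H_j=x\sum_{i\le j}\tilde F_i+\sum_{i>j}\tilde F_i$ is exactly the single multiplication step by $1+x+\cdots+x^{r-1}$. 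This is word for word the content of the paper's Lemma~\ref{lem:fg}, which the paper, like you, attributes to Fisk (Example~3.76) and does not reprove; the paper's entire ``proof'' of Lemma~\ref{lem:a} is the one-line remark that iterating Lemma~\ref{lem:fg} from $(0,\dots,0,1)$ gives the result. So your skeleton matches the paper's, but you have not actually proved the transformation lemma: you explicitly flag $H_j\preceq H_{j+1}$ as ``the main obstacle,'' sketch a peel-off-$\tilde F_1$ induction on $r$, and leave it there. Since that single interlacing-preservation statement \emph{is} the lemma, what you have is a correct reduction plus a plausible but unverified plan, not a proof.

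Two smaller points. First, your opening observation --- that Lemma~\ref{lem:a} is the $h=1$ case of Theorem~\ref{thm:main-hnb} --- would be circular here: the paper's proof of Theorem~\ref{thm:main-hnb} invokes Lemma~\ref{lem:a} directly (to get the interlacing of $(U^n_{r,r-1}I,\dots,U^n_{r,0}I)$ before applying Lemma~\ref{lem:fg_r}), so it cannot be used to establish the lemma. You sidestep this by pivoting to a ``self-contained'' argument, but it is worth saying plainly that the shortcut is off limits. Second, the peel-off plan is not obviously routine: after writing $H_0=\tilde F_1+H'_0$ and $H_j=x\tilde F_1+H'_{j-1}$ for $j\ge1$, the required interlacings $\tilde F_1+H'_0\preceq x\tilde F_1+H'_{j-1}$ mix an unscaled and an $x$-scaled copy of $\tilde F_1$, and as you yourself note, $f\mapsto xf$ does not respect $\preceq$ termwise, so the inductive step needs a genuine argument (for instance via a common interlacer of the whole family), not just the list of standard facts you cite. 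Until that step is supplied, the proposal has a gap at its center.
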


The main result of this paper is as follows.

\begin{thm}\label{thm:main-hnb}
Suppose that $n,r$ are positive integers, and $h(x)$ is a polynomial with nonnegative coefficients. If the polynomial sequence  $\left( h^{\langle r,r-1 \rangle}(x),\ldots, h^{\langle r,1 \rangle}(x), h^{\langle r, 0 \rangle}(x) \right)$ is interlacing, then so is the polynomial sequence  $\left(U_{r,r-1}^{n}\,h(x),\ldots, U_{r,1}^{n}\,h(x), U_{r,0}^{n}\,h(x)\right)$.
\end{thm}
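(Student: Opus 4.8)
The plan is to peel off one factor of $E(x):=1+x+\cdots+x^{r-1}$ at a time and thereby reduce the theorem to a single interlacing-preserving operation, iterated $n$ times. First I would record how multiplication by $E$ acts on the Veronese pieces: from $g(x)=\sum_{j=0}^{r-1}x^{j}g^{\langle r,j\rangle}(x^{r})$ one sorts the exponents of $E(x)g(x)$ by their residue mod $r$ and obtains
$$(Eg)^{\langle r,k\rangle}(x)\;=\;\sum_{j=0}^{k}g^{\langle r,j\rangle}(x)\;+\;x\sum_{j=k+1}^{r-1}g^{\langle r,j\rangle}(x).$$
Taking $g=E^{n-1}h$, using $U^{m}_{r,k}h=(E^{m}h)^{\langle r,k\rangle}$ from \eqref{eq:main}, and writing $W^{(m)}_{i}:=U^{m}_{r,r-i}h$ for $1\le i\le r$, I would rewrite this as the recursion
$$W^{(n)}_{i}\;=\;\bigl(W^{(n-1)}_{i}+W^{(n-1)}_{i+1}+\cdots+W^{(n-1)}_{r}\bigr)\;+\;x\bigl(W^{(n-1)}_{1}+\cdots+W^{(n-1)}_{i-1}\bigr),\qquad W^{(0)}_{i}=h^{\langle r,r-i\rangle}.$$
Since $W^{(0)}=(h^{\langle r,r-1\rangle},\dots,h^{\langle r,0\rangle})$ is interlacing by hypothesis and has nonnegative coefficients (here the hypothesis on $h$ enters), and nonnegativity is visibly preserved by the recursion, Theorem~\ref{thm:main-hnb} will follow by induction on $n$ as soon as we prove the following transformation lemma: the operator
$$T:\;(v_{1},\dots,v_{r})\longmapsto(w_{1},\dots,w_{r}),\qquad w_{i}=(v_{i}+\cdots+v_{r})+x\,(v_{1}+\cdots+v_{i-1}),$$
sends an interlacing sequence of polynomials with nonnegative coefficients to another such sequence. (Lemma~\ref{lem:a} is exactly the case $h=1$, where $W^{(0)}=(0,\dots,0,1)$.)

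The transformation lemma is the crux; it is close in spirit to interlacing-preservation results of Eulerian type used by Savage and Visontai \cite{Savage2015s}, but I would prove it directly. Fix $a<b$ and prove $w_{a}\preceq w_{b}$ by writing both polynomials over a common part: set $\sigma_{i}=v_{i}+\cdots+v_{r}$, $\tau_{i}=v_{1}+\cdots+v_{i}$, $\mu=v_{a}+\cdots+v_{b-1}$ and $P=\sigma_{b}+x\,\tau_{a-1}$, so that $w_{a}=P+\mu$ and $w_{b}=P+x\,\mu$. The standard calculus of interlacing polynomials (if $h\preceq f$ and $h\preceq g$ then $h\preceq f+g$; dually $f\preceq h$ and $g\preceq h$ imply $f+g\preceq h$; also $f\preceq xf$, and $f\preceq g$ implies $g\preceq xf$, for $f,g$ with nonnegative coefficients; see \cite{Braenden2015Unimodality,Fisk2006Polynomials}) gives $\mu\preceq\sigma_{b}$ (from $v_{c}\preceq v_{\ell}$ for $c<\ell$), $\tau_{a-1}\preceq\mu$ (a prefix sum interlaces the complementary suffix sum in an interlacing sequence), hence $\mu\preceq x\,\tau_{a-1}$, and therefore $\mu\preceq P$. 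It then remains only to establish the elementary implication
$$g\preceq f\quad\Longrightarrow\quad(f+g)\preceq(f+xg)\qquad\text{for }f,g\text{ with nonnegative coefficients,}$$
which, applied with $f=P$ and $g=\mu$, yields $w_{a}\preceq w_{b}$ and finishes the transformation lemma.

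The hard part will be this last implication, and I expect to prove it by a sign count. Since $g\preceq f$, the polynomial $f+g$ is real-rooted with all zeros $x_{1}>x_{2}>\cdots>x_{m}$ in $(-\infty,0]$, and $g\preceq f+g$; moreover at each zero $x_{k}$ of $f+g$ we have $(f+xg)(x_{k})=(x_{k}-1)g(x_{k})$. The relation $g\preceq f+g$ forces the sign of $g$ to alternate along the $x_{k}$, while $x_{k}-1<0$, so the numbers $(f+xg)(x_{k})$ alternate in sign; hence $f+xg$ vanishes strictly between consecutive zeros of $f+g$, producing $m-1$ zeros that interlace those of $f+g$. A comparison of degrees, together with the fact that $f+xg$ has nonnegative coefficients (so all of its zeros lie in $(-\infty,0]$), then forces the remaining one or two zeros of $f+xg$ to the extreme ends and completes the interlacing. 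The degenerate cases covered by the paper's conventions---zero polynomials, constant polynomials, and repeated zeros---are routine, handled either by the conventions directly or by a small perturbation. With the transformation lemma in hand, the induction on $n$ closes and Theorem~\ref{thm:main-hnb} follows.
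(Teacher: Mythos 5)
Your proposal is correct, and the overall strategy coincides with the paper's: both proofs reduce Theorem~\ref{thm:main-hnb} to the fact that multiplying by $E(x)=1+x+\cdots+x^{r-1}$ acts on the Veronese pieces $\left(g^{\langle r,r-1\rangle},\dots,g^{\langle r,0\rangle}\right)$ by an interlacing-preserving operator, and then iterate that operator $n$ times starting from the hypothesis $\left(h^{\langle r,r-1\rangle},\dots,h^{\langle r,0\rangle}\right)$. Your recursion $w_i=(v_i+\cdots+v_r)+x(v_1+\cdots+v_{i-1})$ is exactly, after reindexing by $v_i=f_{r-i}$, the content of Fisk's Lemma~\ref{lem:fg}, which the paper simply cites. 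The genuine difference is that you prove this transformation lemma from scratch: you reduce $w_a\preceq w_b$ to a common-core decomposition $w_a=P+\mu$, $w_b=P+x\mu$ with $\mu\preceq P$, and then invoke the implication ``$g\preceq f\Rightarrow(f+g)\preceq(f+xg)$''. That implication is in fact true for polynomials with nonnegative coefficients (one can confirm it by locating the zeros of $f+g$ between those of $g$ and $f$, and the zeros of $f+xg$ between those of $f$ and $xg$, and then chaining the inequalities), though your sign-count sketch leaves the degenerate cases (coincident zeros, zero or constant summands) to a perturbation argument that you only gesture at; making that fully rigorous would require a bit more care, since your argument passes through quantities like $g(x_k)$ that vanish at shared zeros. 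So what your route buys is self-containment---a direct, elementary proof of Fisk's lemma in the style of the Savage--Visontai operator calculus---at the cost of the extra delicacy in the interlacing bookkeeping. The paper, by contrast, keeps the proof short by outsourcing precisely this step to Fisk. Note also that the paper's proof includes a first paragraph re-deriving the real-rootedness of each $U^n_{r,i}h$ via the expansion~\eqref{eq:uri} and Lemma~\ref{lem:fg_r}; that part is logically redundant given the inductive argument, and your proposal rightly skips it.
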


The above theorem has many immediate consequences. For example, the $k=0$ case of Theorem \ref{thm:J} can be derived from Theorem \ref{thm:main-hnb}. Theorem
\ref{thm:main-hnb} also generalizes Lemma \ref{lem:a} since, when $h(x)$ is the constant function of value $1$, the polynomial sequence $\left( h^{\langle r,r-1 \rangle}(x),\ldots, h^{\langle r,1 \rangle}(x), h^{\langle r, 0 \rangle}(x) \right)$ is clearly interlacing.
As shown by Jochemko, Theorem \ref{thm:J} is a consequence of Lemma \ref{lem:a}. For this reason, Theorem
\ref{thm:main-hnb} can also be considered as a generalization of Theorem \ref{thm:J}.


The rest of the paper is organized as follows. We first give a proof of Theorem \ref{thm:main-hnb} in Section \ref{sect-interlacing}. Then some of its applications are presented in Section \ref{sect-applications}, including the derivation of the $k=0$ case of Theorem \ref{thm:J} from Theorem \ref{thm:main-hnb}. One of the most interesting applications is to show that the polynomial sequence  $\left(U_{r,r-1}^{n}\,h(x),\ldots, U_{r,1}^{n}\,h(x), U_{r,0}^{n}\,h(x)\right)$ is interlacing provided that $h(x)$ is real-rooted. Based on this criterion, we further give an alternative proof of the interlacing property of some refinements of the descent generating functions for colored permutations, which is essentially due to Savage and Visontai \cite[Section~3.3]{Savage2015s}. Two key identities are used
in our proof of Savage and Visontai's result, which turn out to be new and will be proved in Section \ref{sect-Carlitz}. To prove these two identities, we establish a Carlitz identity for refined colored permutations in Section \ref{sect-Carlitz}.

\section{Proof of Theorem \ref{thm:main-hnb}}
\label{sect-interlacing}

The objective of this section is to prove Theorem \ref{thm:main-hnb}.
Our proof relies on the theory of interlacing polynomials.

Before giving our proof, let us first recall two results due to Fisk \cite{Fisk2006Polynomials} which will be used later. The first one is stated as follows.

\begin{lem}[{\cite[Example 3.76]{Fisk2006Polynomials}}]\label{lem:fg}
Let $r\ge2$ be an integer. Given a polynomial sequence with nonnegative coefficients   $\left(f_{r-1}(x), \dots, f_{1}(x), f_{0}(x)\right)$, define another sequence
$\left(g_{r-1}(x), \dots, g_{1}(x), g_{0}(x)\right)$ by
\begin{align}
 \left(1+x+\dots+x^{r-1}\right) \left( f_{0}(x^r)+xf_1(x^r)+\dots+x^{r-1}f_{r-1}(x^r) \right) \notag \\
 =g_{0}(x^r)+xg_{1}(x^r)+\dots+x^{r-1}g_{r-1}(x^r). \label{eq:fg}
\end{align}
If  $\left(f_{r-1}(x), \dots, f_{1}(x), f_{0}(x)\right)$  is interlacing, then so is $\left(g_{r-1}(x), \dots, g_{1}(x), g_{0}(x)\right)$.
\end{lem}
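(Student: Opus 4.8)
The plan is to make the linear map $(f_{r-1},\dots,f_0)\mapsto(g_{r-1},\dots,g_0)$ completely explicit and then verify the pairwise relations $g_a\preceq g_b$ one at a time, drawing on a short list of standard closure properties of the relation $\preceq$. First I will expand $(1+x+\dots+x^{r-1})\bigl(\sum_{i=0}^{r-1}x^{i}f_i(x^{r})\bigr)=\sum_{i,j=0}^{r-1}x^{i+j}f_i(x^{r})$ and, for each $0\le s\le r-1$, collect the monomials whose exponent is congruent to $s$ modulo $r$, using $x^{i+j}f_i(x^{r})=x^{i+j-r}\cdot x^{r}f_i(x^{r})$ when $i+j\ge r$. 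This routine bookkeeping yields the closed form
\[
 g_s(x)=\bigl(f_0(x)+f_1(x)+\dots+f_s(x)\bigr)+x\bigl(f_{s+1}(x)+\dots+f_{r-1}(x)\bigr),\qquad 0\le s\le r-1 .
\]

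Next I will use the following standard facts, valid for polynomials with nonnegative coefficients (see Fisk \cite{Fisk2006Polynomials}), together with the trivial $p\preceq p$: (i) if $p\preceq q_1$ and $p\preceq q_2$, then $p\preceq q_1+q_2$; (ii) if $q_1\preceq p$ and $q_2\preceq p$, then $q_1+q_2\preceq p$; (iii) $p\preceq xp$; and (iv) if $q\preceq p$, then $p\preceq xq$. Property (iv) is an elementary zero-count: multiplying $q$ by $x$ inserts a zero at $0$, which sits to the right of every zero of $p$, so $p$ continues to interlace the new polynomial, and the degree bound $\deg(xq)\in\{\deg p,\deg p+1\}$ is easily confirmed from $q\preceq p$.

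Now for the main argument. Since the target sequence is listed in decreasing index, it is interlacing exactly when $g_a\preceq g_b$ for all $r-1\ge a>b\ge 0$. Fix such a pair and set $\mu=f_{b+1}+\dots+f_a$ and $\pi=(f_0+\dots+f_b)+x(f_{a+1}+\dots+f_{r-1})$; the displayed formula then reads $g_a=\mu+\pi$ and $g_b=\pi+x\mu$. I claim the sequence $(\mu,\pi,x\mu)$ is interlacing. Indeed $\mu\preceq x\mu$ is (iii). To see $\mu\preceq\pi$, I first show $f_i\preceq\pi$ for each $b+1\le i\le a$: by (i) it suffices that $f_i\preceq f_j$ for every $j\le b$ --- which holds because $j<i$ places $f_i$ before $f_j$ in the given interlacing sequence --- and that $f_i\preceq xf_j$ for every $j\ge a+1$ --- which holds because then $f_j$ precedes $f_i$, so $f_j\preceq f_i$ and (iv) applies; summing over $i$ via (ii) gives $\mu\preceq\pi$. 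Finally $\pi\preceq x\mu$ follows from $\mu\preceq\pi$ by (iv). Having established that $(\mu,\pi,x\mu)$ is interlacing, (i) gives $\mu\preceq\pi+x\mu$ and $\pi\preceq\pi+x\mu$, and then (ii) yields $g_a=\mu+\pi\preceq\pi+x\mu=g_b$, as desired. The degenerate sub-cases --- an empty block, a vanishing $f_i$, or $\mu=0$ --- are all absorbed by the conventions $0\preceq f$, $f\preceq 0$, and $a\preceq bx+c$.

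I do not expect a serious obstacle, as this is essentially Fisk's Example~3.76 and the plan above just unwinds it. The one point requiring care is property (iv) and, more generally, checking that the nonnegative combinations formed in the first and third steps keep the degrees in the narrow window that $\preceq$ demands (non-decreasing, differing by at most one); the structure of the $g_s$ and the listed conventions make this go through. An alternative to the third step would be to package (i)--(iv) into the assertion that multiplication by a totally nonnegative matrix whose entries have the form $a+bx$ with $a,b\ge 0$ preserves interlacing, and apply it to the matrix with $1$ on and above the diagonal and $x$ below it; but the pairwise argument above is more self-contained.
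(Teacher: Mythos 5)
Your proposal is correct. Note first that the paper does not actually prove this lemma: it is imported verbatim as Fisk's Example~3.76, so your argument is a genuinely self-contained substitute rather than a variant of a proof in the text. Your bookkeeping is right: collecting exponents congruent to $s$ modulo $r$ gives exactly $g_s=(f_0+\cdots+f_s)+x(f_{s+1}+\cdots+f_{r-1})$, and with that formula $g_a=\mu+\pi$, $g_b=\pi+x\mu$ for $a>b$ as you state. The closure properties you invoke are the standard common-interleaver facts ((i) and (ii)) plus two elementary zero-insertion observations ((iii) and (iv)), and (iv) is where nonpositivity of the zeros --- guaranteed by the nonnegative coefficients --- is genuinely used; you checked the degree window there, which is the only delicate point, and the paper's conventions ($0\preceq f$, $f\preceq 0$, $a\preceq bx+c$) absorb the degenerate cases exactly as you say. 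The chain $f_i\preceq f_j$ ($j\le b$), $f_j\preceq f_i\Rightarrow f_i\preceq xf_j$ ($j\ge a+1$), then $\mu\preceq\pi\Rightarrow\pi\preceq x\mu$, and finally $\mu+\pi\preceq\pi+x\mu$ via (i)--(ii), is sound. Your closing remark that the map is multiplication by the matrix with $1$ on and above the diagonal and $x$ below it is in fact essentially Fisk's own point of view in Example~3.76, so your pairwise argument can be seen as an unwound, more elementary version of the cited source; it buys the reader a verifiable proof at the cost of a page of routine checks, whereas the paper buys brevity at the cost of an external citation to an unpublished manuscript.
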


Note that successive applications of the above result to
$$\left(f_{r-1}(x), \dots, f_{1}(x), f_{0}(x)\right)=(0,\dots,0,1)$$
and the intermediate polynomial sequences
lead to Lemma \ref{lem:a}.

The second result we shall use is as follows.

\begin{lem}[{\cite[Lemma 3.16]{Fisk2006Polynomials}}]\label{lem:fg_r}
 If both $\left(f_{1}(x), f_{2}(x), \ldots, f_{n}(x)\right)$ and $\left(g_{1}(x), g_{2}(x), \ldots, g_{n}(x)\right)$ are interlacing, then the polynomial
 \begin{align*}
  \sum_{i=1}^{n} f_{i} g_{n+1-i} = f_1 g_n + f_2 g_{n-1} + \cdots + f_n g_1
 \end{align*}
has only real zeros.
\end{lem}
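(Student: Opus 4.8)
The plan is to prove Theorem \ref{thm:main-hnb} by writing $U^n_{r,k}\,h$ as a ``convolution'' of the polynomials $U^n_{r,j}\,I$ from Lemma \ref{lem:a} against the components $h^{\langle r,i\rangle}$, and then invoking Lemmas \ref{lem:fg} and \ref{lem:fg_r} to control interlacing. First I would record the algebraic identity underlying everything. Since $h(x)=\sum_{i=0}^{r-1}x^i h^{\langle r,i\rangle}(x^r)$ and, by \eqref{eq:main}, $U^n_{r,k}\,I(x^r)$ appears as the coefficient block of $x^k$ in $(1+x+\cdots+x^{r-1})^n$, multiplying these two expansions and collecting powers of $x$ modulo $r$ gives a formula of the shape
\begin{align*}
 U^n_{r,k}\,h(x) \;=\; \sum_{i=0}^{r-1} c_{k,i}(x)\, h^{\langle r,i\rangle}(x),
\end{align*}
where each $c_{k,i}$ is, up to a power of $x$, one of the polynomials $U^n_{r,j}\,I$ (the index $j$ being $k-i$ or $k-i+r$ depending on whether $k\ge i$). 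The first step is therefore to make this bookkeeping precise: carefully track which shift $x^{\epsilon}$, $\epsilon\in\{0,1\}$, multiplies each term, since when $i>k$ the relevant power of $x$ in the product exceeds $r$ and contributes $x\cdot x^{r}$, i.e. an extra factor of $x$ after extracting the $\langle r,k\rangle$ part.

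Next I would reorganize this sum so that Lemma \ref{lem:fg_r} applies. That lemma needs two interlacing sequences of equal length whose ``reverse convolution'' $\sum f_a g_{n+1-a}$ we want to be real-rooted. Here the role of one sequence is played by (a suitable reindexing of) $\left(U^n_{r,r-1}\,I,\ldots,U^n_{r,0}\,I\right)$, which is interlacing by Lemma \ref{lem:a}, and the role of the other by $\left(h^{\langle r,r-1\rangle},\ldots,h^{\langle r,0\rangle}\right)$, which is interlacing by hypothesis. The key point is that, for each fixed target index $k$, the formula above is exactly of the form $\sum_{a} f_a g_{\sigma(k)+1-a}$ for an appropriate cyclic shift $\sigma(k)$ of the indices, because the map $i\mapsto j=k-i \pmod r$ is an index-reversal composed with a rotation. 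So real-rootedness of each individual $U^n_{r,k}\,h$ will follow from Lemma \ref{lem:fg_r} once the cyclic-shift and the $x$-shift subtleties are handled. The extra factor $x$ on the ``wrap-around'' terms is harmless for real-rootedness (multiplying a real-rooted polynomial with nonnegative zeros by $x$ keeps it so), but it does slightly shift degrees, which is exactly what one must watch when upgrading from real-rootedness to interlacing.

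Real-rootedness of each $U^n_{r,k}\,h$ is not enough; Theorem \ref{thm:main-hnb} asserts the whole sequence $\left(U^n_{r,r-1}\,h,\ldots,U^n_{r,0}\,h\right)$ is interlacing. To get this I would first reduce to the case where $h$ itself has the minimal ``Veronese width,'' i.e. handle the step $n\mapsto n+1$ inductively: note $U^{n+1}_{r,k}\,h = U^{1}_{r,k}\big((1+x+\cdots+x^{r-1})^{n}h\big)$, and by Lemma \ref{lem:fg} the hypothesis ``$\left(h^{\langle r,r-i\rangle}\right)_i$ interlacing'' propagates to ``$\left(((1+x+\cdots+x^{r-1})^{n}h)^{\langle r,r-i\rangle}\right)_i$ interlacing.'' Thus it suffices to prove the base case $n=1$: if $\left(h^{\langle r,r-i\rangle}\right)_{1\le i\le r}$ is interlacing then so is $\left(U^1_{r,r-i}\,h\right)_{1\le i\le r}$. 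But $U^1_{r,k}\,h$ is built from $(1+x+\cdots+x^{r-1})^1 h$, and in the $n=1$ case the coefficients $U^1_{r,j}\,I$ are all equal to $1$ (since $1+x+\cdots+x^{r-1}$ has each Veronese component equal to the constant $1$), so $U^1_{r,k}\,h$ is literally a sum of consecutive $h^{\langle r,i\rangle}$'s, possibly with an $x$ on the wrap-around terms — and for such sums the interlacing of partial/cyclic sums of an interlacing sequence is a standard fact (it is essentially Lemma \ref{lem:fg} again, or follows from the two-term rules $f\sep g,\ f\sep h \Rightarrow f\sep g+h$ and $g\sep f,\ h\sep f\Rightarrow g+h\sep f$ together with $p\sep xp$). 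The main obstacle, and where I expect to spend the most care, is the index bookkeeping in the two-step passage: keeping straight the cyclic reindexing $i\leftrightarrow r-i$, the off-by-one in degrees caused by the $x^{r}$ wrap-around terms, and checking that these degree shifts are compatible with the ``$\deg f=\deg g$ or $\deg g+1$'' requirement in the definition of $\sep$, so that the real-rootedness coming out of Lemma \ref{lem:fg_r} actually assembles into genuine interlacing rather than just a list of real-rooted polynomials.
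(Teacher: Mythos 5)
There is a fundamental mismatch: the statement you were asked to prove is Lemma \ref{lem:fg_r} itself (that if $\left(f_{1},\dots,f_{n}\right)$ and $\left(g_{1},\dots,g_{n}\right)$ are interlacing sequences, then $\sum_{i=1}^{n} f_i g_{n+1-i}$ is real-rooted), but your proposal is an outline of a proof of Theorem \ref{thm:main-hnb}, and it explicitly \emph{invokes} Lemma \ref{lem:fg_r} as one of its main tools. As an argument for the lemma it is therefore circular, and in fact nothing in the proposal engages with the actual content of the lemma: no step analyzes why the reverse convolution of two interlacing sequences has only real zeros. The bookkeeping you describe (which $U^{n}_{r,j}\,I$ multiplies which $h^{\langle r,i\rangle}$, the wrap-around factor of $x$, the cyclic reindexing) is all machinery for the application, not for the lemma.

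For comparison, the paper does not prove Lemma \ref{lem:fg_r} either; it cites it from Fisk \cite[Lemma 3.16]{Fisk2006Polynomials} and uses it, exactly as you do, inside the proof of Theorem \ref{thm:main-hnb} (your outline of that theorem is in fact close in spirit to the paper's: the identity \eqref{eq:uri} plus Lemma \ref{lem:a} and Lemma \ref{lem:fg_r} give real-rootedness, and induction via Lemma \ref{lem:fg} gives the interlacing of the whole sequence). But if the task is to supply a proof of Lemma \ref{lem:fg_r}, you need a genuine argument about interlacing sequences themselves — for instance, reducing to the fact that $(f_1,\dots,f_n)$ is interlacing if and only if every nonnegative combination, and more strongly every ``positive semidefinite pairing'' of the two sequences in reversed order, is real-rooted, or following Fisk's own inductive argument using that $f_i \sep f_j$ and $g_i \sep g_j$ for $i<j$ force the summands $f_i g_{n+1-i}$ to have a common interlacer. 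None of that appears in your proposal, so as it stands the statement remains unproved.
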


We proceed to prove Theorem \ref{thm:main-hnb}.
\begin{proof}[Proof of Theorem \ref{thm:main-hnb}]
We first show the real-rootedness of $U_{r,i}^{n} h(x)$.
By \eqref{eq:main} the polynomial $U_{r,i}^{n}\, h(x)$ can be expressed in terms of $h^{\langle r,j \rangle}(x)$ and $U_{r,j}^{n} I$ as follows:
\begin{align}
 U_{r,i}^{n}\, h(x) = & \ \left( h^{\langle r,i \rangle}(x) \, U_{r,0}^{n} I    + \cdots + h^{\langle r,0\rangle}(x) \, U_{r,i}^{n} I  \right) \notag \\
 &\ + x \left( h^{\langle r,r-1\rangle}(x) \, U_{r,i+1}^{n} I    + \cdots + h^{\langle r,i+1\rangle}(x) \, U_{r,r-1}^{n} I  \right). \label{eq:uri}
\end{align}
By Lemma \ref{lem:a},
we know that $$\left(U_{r,r-1}^{n}\,I,\ldots, U_{r,1}^{n}\,I, U_{r,0}^{n}\,I\right)$$ is interlacing. Since $$( h^{\langle r,r-1 \rangle}(x), \ldots, h^{\langle r,1 \rangle}(x), h^{\langle r, 0 \rangle}(x) )$$ is interlacing and all the zeros of these polynomials are negative, it follows that
$$\left( h^{\langle r,i \rangle}(x), \ldots, h^{\langle r,0 \rangle}(x), x h^{\langle r,r-1 \rangle}(x), \ldots, x h^{\langle r,i+1 \rangle}(x) \right)$$ is also interlacing.
 Hence, the real-rootedness of $U^{r,i}_{n}\, h(x)$ follows from Lemma \ref{lem:fg_r}.

We next prove the sequence $$\left(U_{r,r-1}^{n}\,h(x),\ldots, U_{r,1}^{n}\,h(x), U_{r,0}^{n}\,h(x)\right)$$ is interlacing. This can be done by induction on $n$.
For the initial case $n=0$, the polynomial sequence $( h^{\langle r,r-1 \rangle}(x), \ldots, h^{\langle r,1 \rangle}(x), h^{\langle r, 0 \rangle}(x) )$ is interlacing. Assume the argument is true for $n$.
Since
\begin{align*}
 \left(1+x+\dots+x^{r-1}\right) \left( U_{r,0}^{n}\,h(x^r)+ x U_{r,1}^{n}\,h(x^r)+\dots+x^{r-1} U_{r,r-1}^{n}\,h(x^r) \right)  \\
 =\left( U_{r,0}^{n+1}\,h(x^r)+ x U_{r,1}^{n+1}\,h(x^r)+\dots+x^{r-1} U_{r,r-1}^{n+1}\,h(x^r) \right),
\end{align*}
 the interlacing property of $\left(U_{r,r-1}^{n+1}\,h(x),\ldots, U_{r,1}^{n+1}\,h(x), U_{r,0}^{n+1}\,h(x)\right)$ immediately follows from Lemma \ref{lem:fg}. This completes the proof.
\end{proof}

\section{Applications}\label{sect-applications}

In this section we shall present some applications of Theorem \ref{thm:main-hnb}. As will be shown,
this leads to useful criteria to determine the interlacing property of $\left( U_{r,r-1}^{n}\, h(x), \ldots, U_{r,0}^{n}\, h(x) \right)$.

First, let us show how to apply Theorem \ref{thm:main-hnb}
to prove Beck and Stapledon's conjecture. Since Conjecture \ref{conj-Beck} is implied by the $k=0$ case of Theorem \ref{thm:J}, we shall directly show how to derive the latter from Theorem \ref{thm:main-hnb}. For $k=0$, the hypothesis of Theorem \ref{thm:J} tells us that $\deg h(x)\leq r$. Thus the polynomial $h^{\langle r,i\rangle}(x)$ is just a constant for each $1\le i \le r-1$, and $h^{\langle r,0\rangle}(x)$ is either a constant or a linear polynomial.
It is clear that $\left( h^{\langle r,r-1 \rangle}(x),\ldots, h^{\langle r,1 \rangle}(x), h^{\langle r, 0 \rangle}(x) \right)$ is interlacing. Applying Theorem \ref{thm:main-hnb}, we get
the following result.

\begin{cor}\label{thm:deg_h<=r}
Given a positive integer $r$, if $h(x)$ is a polynomial of degree $\leq r$ with  nonnegative coefficients, then the sequence $\left( U_{r,r-1}^{n}\, h(x), \ldots, U_{r,0}^{n}\, h(x) \right)$ is interlacing.
\end{cor}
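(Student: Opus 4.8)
The plan is to deduce this immediately from Theorem \ref{thm:main-hnb}; the only point requiring verification is that the hypothesis $\deg h(x)\le r$ forces the sequence $\left( h^{\langle r,r-1 \rangle}(x),\ldots, h^{\langle r,1 \rangle}(x), h^{\langle r, 0 \rangle}(x) \right)$ to be interlacing, and this is a short computation already sketched in the paragraph preceding the statement. First I would write $h(x)=\sum_{j=0}^{r} a_j x^j$ with all $a_j\ge 0$ (padding by zeros if $\deg h<r$). By the defining relation of the components, the exponent $j$ contributes to $h^{\langle r,k \rangle}$ exactly when $j\equiv k \pmod r$; among the exponents $0,1,\dots,r$ the only pair congruent modulo $r$ is $\{0,r\}$. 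Hence $h^{\langle r,0 \rangle}(x)=a_0+a_r x$ has degree at most $1$ with nonnegative coefficients, while $h^{\langle r,i \rangle}(x)=a_i$ is a nonnegative constant for each $1\le i\le r-1$.

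Next I would observe that such a sequence --- a run of nonnegative constants followed by a polynomial of degree at most one with nonnegative coefficients --- is interlacing directly from the conventions fixed in the introduction: any two nonnegative constants interlace each other (they have equal degree and no zeros), a nonnegative constant $a$ satisfies $a\sep a_0+a_r x$, and any degenerate case in which one of these polynomials is the zero polynomial is covered by $0\sep f$ and $f\sep 0$. Therefore $\left( h^{\langle r,r-1 \rangle}(x),\ldots, h^{\langle r,1 \rangle}(x), h^{\langle r, 0 \rangle}(x) \right)$ is interlacing, and Theorem \ref{thm:main-hnb} applied to this $h$ yields that $\left(U_{r,r-1}^{n}\,h(x),\ldots, U_{r,1}^{n}\,h(x), U_{r,0}^{n}\,h(x)\right)$ is interlacing, which is the assertion.

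There is essentially no obstacle: all the real work is in Theorem \ref{thm:main-hnb} and, behind it, in Fisk's Lemmas \ref{lem:fg} and \ref{lem:fg_r}. The only thing that needs a careful sentence is the bookkeeping of degrees in the definition of interlacing when the list consists of constants together with one (possibly degenerate) linear polynomial, and the conventions have been set up precisely so that this goes through. As a remark, taking the first entry of the resulting interlacing sequence recovers the real-rootedness of $U_{r,0}^{n}\,h(x)$ for $\deg h\le r$, i.e.\ the $k=0$ case of Theorem \ref{thm:J} and hence Conjecture \ref{conj-Beck}, so this corollary is exactly the specialization through which Theorem \ref{thm:main-hnb} settles the Beck--Stapledon conjecture.
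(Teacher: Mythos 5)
Your argument is correct and matches the paper's approach exactly: both observe that $\deg h\le r$ forces $h^{\langle r,i\rangle}$ to be a nonnegative constant for $1\le i\le r-1$ and $h^{\langle r,0\rangle}$ to be a constant or linear polynomial with nonnegative coefficients, so the component sequence is interlacing by the conventions, and then both apply Theorem \ref{thm:main-hnb}. The extra bookkeeping you supply (which exponents land in which component) is a welcome clarification but does not change the method.
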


The above corollary implies that each $U_{r,j}^{n}\, h(x)$ is real-rooted for $0\leq j\leq r-1$. Thus we obtain another proof of the $k=0$ case of Theorem \ref{thm:J}, and hence that of Conjecture \ref{conj-Beck}.

The second application of Theorem \ref{thm:main-hnb} is concerned with a criterion for determining the interlacing property of $\left( U_{r,r-1}^{n}\, h(x), \ldots, U_{r,0}^{n}\, h(x) \right)$ by imposing restrictions on the coefficients of $h(x)$.

\begin{cor}\label{thm:deg_h<2r}
Fixing a positive integer $r$, let $h(x)$ be a polynomial with nonnegative coefficients of degree less than $2r$, say
$$h(x) = h_0 + h_1 x +h_2 x^2 + \cdots + h_{2r-1}x^{2r-1}.$$
If its coefficients satisfy the following inequalities
\[
h_{i}h_{r+j}\le h_{j}h_{r+i}, \quad \mbox{for any } 0\leq i<j\le r-1,
\]
then the polynomial sequence $\left(U_{r,r-1}^{n}\,h(x),\ldots, U_{r,1}^{n}\,h(x), U_{r,0}^{n}\,h(x)\right)$ is interlacing.
\end{cor}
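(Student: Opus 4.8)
The plan is to apply Theorem~\ref{thm:main-hnb}: once I know that the sequence $\left( h^{\langle r,r-1 \rangle}(x), \ldots, h^{\langle r,1 \rangle}(x), h^{\langle r,0 \rangle}(x) \right)$ is interlacing, the interlacing of $\left( U_{r,r-1}^{n}\,h(x), \ldots, U_{r,1}^{n}\,h(x), U_{r,0}^{n}\,h(x) \right)$ follows immediately, so the whole task reduces to understanding the Veronese components $h^{\langle r,i \rangle}$. Since $\deg h < 2r$, every exponent occurring in $h$ can be written uniquely as $qr+i$ with $0 \le i \le r-1$ and $q\in\{0,1\}$, whence
\begin{align*}
 h(x) = \sum_{i=0}^{r-1} x^{i}\left( h_{i} + h_{r+i}\,x^{r} \right),
\end{align*}
and therefore $h^{\langle r,i \rangle}(x) = h_{i} + h_{r+i}\,x$ for each $0 \le i \le r-1$. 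Thus every term of the sequence in question is a polynomial of degree at most one with nonnegative coefficients.

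Next I would verify the pairwise interlacing relations. By definition I must show $h^{\langle r,a \rangle}(x) \sep h^{\langle r,b \rangle}(x)$ for all $a > b$. In the generic case $h_{r+a}>0$ and $h_{r+b}>0$, both polynomials are honestly linear with positive leading coefficients, and their zeros $-h_{a}/h_{r+a}$ and $-h_{b}/h_{r+b}$ lie in $(-\infty,0]$; the relation $h^{\langle r,a \rangle} \sep h^{\langle r,b \rangle}$ then amounts exactly to the inequality of zeros $-h_{a}/h_{r+a} \le -h_{b}/h_{r+b}$, i.e.\ to $h_{b}h_{r+a} \le h_{a}h_{r+b}$ --- which is precisely the hypothesis of the corollary with $i=b<j=a$. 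The one point that needs a little care is matching the direction of this cross-multiplied inequality to the stated hypothesis, but it lines up on the nose.

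The final step would be to treat the degenerate cases. If $h_{r+a}=0$, then $h^{\langle r,a \rangle}$ is the constant $h_{a}$, and the convention $a \sep bx+c$ for nonnegative $a,b,c$ (together with $0 \sep f$) gives $h^{\langle r,a \rangle} \sep h^{\langle r,b \rangle}$ for free. The only genuinely delicate configuration is $h_{r+a}>0$ but $h_{r+b}=0$, in which a truly linear polynomial would have to interlace a constant --- impossible on degree grounds; but here the hypothesis with $i=b$, $j=a$ reads $h_{b}h_{r+a} \le h_{a}h_{r+b}=0$, which forces $h_{b}=0$ since $h_{r+a}>0$, so $h^{\langle r,b \rangle}$ is the zero polynomial and $f \sep 0$ applies. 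Having established $h^{\langle r,a \rangle} \sep h^{\langle r,b \rangle}$ for every pair $a>b$, the sequence $\left( h^{\langle r,r-1 \rangle}(x), \ldots, h^{\langle r,0 \rangle}(x) \right)$ is interlacing, and Theorem~\ref{thm:main-hnb} delivers the corollary. I do not expect a substantive obstacle here: morally, the coefficient inequalities in the hypothesis are exactly the assertion that the (at most linear) Veronese pieces of $h$ have weakly increasing zeros, and the only real work is the bookkeeping of the boundary cases where a piece degenerates to a constant or to $0$.
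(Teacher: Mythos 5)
Your proposal is correct and follows essentially the same route as the paper: reduce via Theorem~\ref{thm:main-hnb} to showing $(h^{\langle r,r-1\rangle},\ldots,h^{\langle r,0\rangle})$ is interlacing, observe each $h^{\langle r,i\rangle}(x)=h_i+h_{r+i}x$, then check the pairwise relations by the same three-way case split (both leading coefficients positive, the higher-indexed one vanishing, and the lower-indexed one vanishing with the higher positive), with the same sign-direction bookkeeping and use of the degenerate conventions. No substantive differences from the paper's argument.
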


\begin{proof}
By Theorem \ref{thm:main-hnb}, it suffices to show
\begin{align}\label{eq:ineq}
h^{\langle r,j \rangle}(x) = h_j+h_{r+j}x \sep h_i+h_{r+i}x=h^{\langle r,i \rangle}(x)
\end{align}
for any $0\leq i<j\le r-1$. There are three cases to consider.
\begin{itemize}
\item[(i)] If $h_{r+j}h_{r+i}\neq 0$, we only need to prove that $$-\frac{h_j}{h_{r+j}}\le -\frac{h_i}{h_{r+i}},$$
which is equivalent to the hypothesis $h_{i}h_{r+j}\le h_{j}h_{r+i}$.

\item[(ii)] If $h_{r+j}=0$, then $h^{\langle r,j \rangle}(x)$ is a constant. By convention, \eqref{eq:ineq} is true.

\item[(iii)] If $h_{r+i}=0$ and $h_{r+j}\neq 0$, then from the hypothesis $h_{i}h_{r+j}\le h_{j}h_{r+i}$ it follows that $h_{i}h_{r+j}=0$. Since $h_{r+j}\neq 0$, we must have $h_{i}=0$ and hence $h^{\langle r,i \rangle}(x)=0$.  By convention, \eqref{eq:ineq} is true.
\end{itemize}
Combining all the above cases, we prove that $\left( h^{\langle r,r-1 \rangle}(x), \ldots, h^{\langle r,1 \rangle}(x), h^{\langle r, 0 \rangle}(x) \right)$ is interlacing. Then applying Theorem \ref{thm:main-hnb} we obtain the desired result.
\end{proof}

Corollary \ref{thm:deg_h<2r} also enables us to give another criterion to determine the interlacing property of $\left(U_{r,r-1}^{n}\,h(x),\ldots, U_{r,1}^{n}\,h(x), U_{r,0}^{n}\,h(x)\right)$ when $\deg h(x)<2r$. This new criterion uses the log-concavity of the coefficient sequence of $h(x)$. Recall that  a sequence $(a_i)_{i\geq 0}$ is log-concave if
\[
a_{i}^2\ge a_{i-1}a_{i+1}, \quad \mbox{ for } i\geq 1,
\]
and it has no internal zeros if $a_k\neq 0$ for  all $k: i<k<j$ whenever $a_i a_j \neq 0$. We have the following result.

\begin{cor} \label{thm:deg_h<2r_log-concave}
Fixing a positive integer $r$, let $h(x)$ be a polynomial with nonnegative coefficients of degree less than $2r$, say
$$h(x) = h_0 + h_1 x +h_2 x^2 + \cdots + h_{2r-1}x^{2r-1}.$$
If the coefficient sequence $(h_0, h_1, \ldots, h_{2r-1})$ is log-concave and has no internal zeros, then the polynomial sequence $\left(U_{r,r-1}^{n}\,h(x),\ldots, U_{r,1}^{n}\,h(x), U_{r,0}^{n}\,h(x)\right)$ is interlacing.
\end{cor}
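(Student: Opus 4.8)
The plan is to reduce Corollary \ref{thm:deg_h<2r_log-concave} to Corollary \ref{thm:deg_h<2r} by showing that the hypothesis of log-concavity with no internal zeros implies the inequalities $h_i h_{r+j} \le h_j h_{r+i}$ for all $0 \le i < j \le r-1$. Once this implication is established, the conclusion follows immediately from Corollary \ref{thm:deg_h<2r}, so the entire content of the proof is this elementary inequality manipulation on the coefficient sequence.

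First I would recall the standard fact that if $(a_0, a_1, \ldots, a_m)$ is a nonnegative log-concave sequence with no internal zeros, then it is ``ratio-monotone'' on its support: whenever $a_{p}, a_{p+1}, \ldots, a_{q}$ are all positive, the ratios $a_{p+1}/a_p \ge a_{p+2}/a_{p+1} \ge \cdots$ are nonincreasing, and more generally $a_s a_t \ge a_{s'} a_{t'}$ whenever $s \le s'$, $t \ge t'$ and $s+t = s'+t'$ (the products along an antidiagonal are maximized by spreading the indices apart). Applying this with the four indices $i < j \le r-1$ and $r+i < r+j$: note $i + (r+j) = j + (r+i)$, and $i \le j$, $r+j \ge r+i$, so the ``spread-apart'' pair is $\{i, r+j\}$ and the ``pulled-together'' pair is $\{j, r+i\}$. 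Hence $h_i h_{r+j} \le h_j h_{r+i}$, which is exactly the hypothesis required by Corollary \ref{thm:deg_h<2r}.

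The one genuine subtlety is the handling of zeros, which is why the ``no internal zeros'' condition is needed rather than mere log-concavity. I would argue as follows: if $h_i = 0$ or $h_{r+j} = 0$ then the left side is zero and the inequality is trivial. Otherwise $h_i > 0$ and $h_{r+j} > 0$, and since $i \le j \le r+i \le r+j$ all lie between two positive entries, the no-internal-zeros hypothesis forces $h_j > 0$ and $h_{r+i} > 0$ as well; then the telescoping product of the ratio inequalities $h_{\ell+1}/h_\ell \ge h_{\ell+2}/h_{\ell+1}$ over the relevant ranges yields $h_i h_{r+j} \le h_j h_{r+i}$. Concretely, one can write $h_i h_{r+j} \le h_j h_{r+i}$ as $\prod_{\ell=i}^{j-1}\frac{h_{\ell+1}}{h_\ell} \le \prod_{\ell=r+i}^{r+j-1}\frac{h_{\ell+1}}{h_\ell}$ after cancelling, i.e.\ comparing a product of $j-i$ consecutive ratios near the front with the product of the $j-i$ consecutive ratios shifted up by $r$ positions; since log-concavity makes each individual ratio nonincreasing in its index, each factor on the left dominates the corresponding factor on the right, giving the claim.

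I do not anticipate a serious obstacle here; the proof is short. The only place to be careful is to state the ratio-monotonicity lemma cleanly (or prove it inline in one or two lines) and to make sure the zero cases are dispatched using \emph{no internal zeros} exactly where log-concavity alone would fail --- for instance, a sequence like $(1,0,1)$ is not log-concave, but $(1,0,0,1,1,1)$-type patterns show why internal zeros must be excluded before invoking the ratio argument. After verifying the coefficient inequalities, the proof closes in one sentence by citing Corollary \ref{thm:deg_h<2r}.
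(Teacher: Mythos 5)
Your proof is correct and takes essentially the same route as the paper: reduce to Corollary \ref{thm:deg_h<2r} by establishing $h_i h_{r+j} \le h_j h_{r+i}$ via the telescoping ratio-monotonicity $h_{\ell}/h_{\ell+1} \ge h_{\ell-1}/h_{\ell}$ that log-concavity supplies. The only (minor) difference is that the paper dispatches the degenerate cases with a terse ``without loss of generality all $h_i$ are positive,'' whereas you spell out the zero cases explicitly using the no-internal-zeros hypothesis --- slightly more careful, but the same underlying argument.
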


\begin{proof} Without loss of generality, we may assume that all $h_i$ are positive. By Corollary \ref{thm:deg_h<2r}, it suffices to show that
 $$\frac{h_j}{h_{r+j}}\ge \frac{h_i}{h_{r+i}}$$
 for any $0\le i<j\le r-1$.
 By the log-concavity of $(h_0, h_1, \ldots, h_{2r-1})$, for any $0\le k<\ell \le r-1$ we have
  \begin{align*}
  \frac{h_\ell}{h_{\ell+1}} \ge \frac{h_{\ell-1}}{h_{\ell}} \ge \cdots  \ge \frac{h_{k+1}}{h_{k+2}} \ge \frac{h_{k}}{h_{k+1}},
 \end{align*}
 and thus $$\frac{h_\ell}{h_{k}} \ge \frac{h_{\ell+1}}{h_{k+1}}.$$
Hence, it follows that
  \begin{align*}
  \frac{h_j}{h_{i}} \ge \frac{h_{j+1}}{h_{i+1}} \ge \cdots  \ge \frac{h_{r+j}}{h_{r+i}}.
 \end{align*}
This completes the proof.
\end{proof}

The well-known Newton inequality says that the coefficients of a real-rooted polynomial must be log-concave. Thus it is possible to replace the log-concavity condition by the real-rootedness of
$h(x)$ in Corollary \ref{thm:deg_h<2r_log-concave}. In fact, if $h(x)$ has only real zeros, then we no longer need the condition
on the restriction of $\deg h(x)$. As the third application of Theorem \ref{thm:main-hnb}, we have the following result.

\begin{cor}\label{thm:real-zeros}
Suppose that $r$ is a positive integer and $h(x)$ is a polynomial with nonnegative coefficients. If $h(x)$ has only negative real zeros, then the polynomial sequence  $\left(U_{r,r-1}^{n}\,h(x),\ldots, U_{r,1}^{n}\,h(x), U_{r,0}^{n}\,h(x)\right)$ is interlacing.
\end{cor}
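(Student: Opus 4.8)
The plan is to reduce to Corollary \ref{thm:deg_h<2r_log-concave} (or directly to Theorem \ref{thm:main-hnb}) by a change-of-variable trick that lets us artificially inflate $r$. The obstacle is that $h(x)$ may have degree far exceeding $2r$, so the hypotheses of Corollary \ref{thm:deg_h<2r_log-concave} do not apply to $h(x)$ itself; but they would apply to $h(x)$ viewed through a coarser Veronese split. First I would recall the identity $U^n_{r,k} = U^n_{r,k} \circ (\text{refinement})$ in the following precise form: for any positive integer $m$, the operators $U^{n}_{rm,\cdot}$ factor through $U^{n}_{r,\cdot}$ and $U^{n}_{m,\cdot}$. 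Concretely, writing $a_i$ for the coefficient sequence whose generating function is $h(x)/(1-x)^n$, extracting the residues mod $rm$ can be done by first extracting residues mod $m$ and then mod $r$. This gives, for suitable indices, an expression of $U^n_{r,k}h$ in terms of the polynomials $U^n_{rm, \ell} h$ for various $\ell$.

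The key step is then: choose $m$ large enough that $\deg\big((1+x+\cdots+x^{rm-1})^n h(x)\big) < 2rm$, i.e. $\deg h(x) < 2rm - n(rm-1)$ — wait, that fails for $n \ge 2$. So instead I would argue the other way around: the sequence we actually need to be interlacing is $\left( h^{\langle r, r-i\rangle}(x)\right)_{1 \le i \le r}$, and I claim this follows from $h(x)$ being real-rooted with negative zeros. Indeed, if $h(x) = c\prod_j (x + \rho_j)$ with all $\rho_j > 0$, then for each residue class the polynomial $h^{\langle r,k\rangle}(x)$ is obtained by a substitution that preserves real-rootedness. The cleanest route: write $h(x) = \sum_k x^k h^{\langle r,k\rangle}(x^r)$, and use the standard fact (Fisk, or a direct argument via the action of the "Veronese" operator on products of linear forms) that when $h$ is real-rooted with nonnegative coefficients, the sequence $(h^{\langle r, r-1\rangle}, \ldots, h^{\langle r,0\rangle})$ is interlacing. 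Granting that, Theorem \ref{thm:main-hnb} applies verbatim and we are done.

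So the heart of the matter is the lemma: \emph{if $h(x)$ is real-rooted with only nonpositive real zeros, then $\left(h^{\langle r,r-1\rangle}(x), \ldots, h^{\langle r,1\rangle}(x), h^{\langle r,0\rangle}(x)\right)$ is interlacing.} I would prove this by the following steps. (1) Reduce to $h(x) = (x+a)$ a single linear factor with $a \ge 0$: here $h^{\langle r,0\rangle} = a$ constant, $h^{\langle r,1\rangle} = 1$ constant, rest zero, and the two-term sequence $(1, a)$ is trivially interlacing — likewise if $h$ is a constant. (2) Show the class of sequences $(f_{r-1}, \ldots, f_0)$ arising as Veronese components is closed under the operation "multiply the underlying polynomial by $(x+a)$": multiplying $\sum_k x^k f_k(x^r)$ by $(x+a)$ sends $(f_0, f_1, \ldots, f_{r-1}) \mapsto (af_0 + x f_{r-1},\ af_1 + f_0,\ \ldots,\ af_{r-1} + f_{r-2})$, and I would verify that this operation preserves the interlacing property of the reversed sequence — this is a cyclic-shift-plus-scaling argument essentially identical in spirit to Lemma \ref{lem:fg} but with the single linear factor $(x+a)$ in place of $(1+x+\cdots+x^{r-1})$. (3) Induct on $\deg h$, peeling off one real root at a time.

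The main obstacle I anticipate is step (2): checking that the map $(f_0, \ldots, f_{r-1}) \mapsto (af_0 + xf_{r-1}, af_1 + f_0, \ldots, af_{r-1} + f_{r-2})$ preserves reversed-interlacing. The subtlety is the "wrap-around" entry $af_0 + xf_{r-1}$, where an extra factor of $x$ appears on $f_{r-1}$; one must check that $f_{r-1} \preceq f_0$ (part of the hypothesis) together with the sign/degree bookkeeping still yields $x f_{r-1} \preceq a f_1 + f_0$ after the shift, and that $a f_j + f_{j-1}$ interlaces $a f_{j+1} + f_j$ for the interior indices. This is where the convention $a \preceq bx + c$ and the degree constraints in the definition of interlacing get used carefully; it may be cleanest to invoke a known "compatible polynomials" / linear-operator-preserving-interlacing lemma (e.g. that $\begin{pmatrix} a & x \\ 1 & a \end{pmatrix}$-type matrices with nonnegative entries act well on interlacing pairs) rather than rederiving it. Alternatively, one can bypass (2) entirely by observing that a real-rooted $h$ with nonpositive zeros, after the substitution $x \mapsto x^{1/r}$ in the appropriate sense, still has real components — but making that rigorous essentially reproves the same lemma, so I would present the inductive argument above.
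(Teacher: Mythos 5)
Your proposal correctly identifies the exact key reduction used in the paper: the heart of the matter is the lemma that if $h(x)$ has nonnegative coefficients and only real (negative) zeros, then the Veronese-component sequence $\left(h^{\langle r,r-1\rangle}(x),\ldots,h^{\langle r,0\rangle}(x)\right)$ is interlacing, after which Theorem~\ref{thm:main-hnb} applies verbatim. The paper's entire proof of Corollary~\ref{thm:real-zeros} is a one-line citation of this lemma as Theorem~7.65 in Fisk \cite{Fisk2006Polynomials}, followed by the application of Theorem~\ref{thm:main-hnb}. So at the top level you have exactly the paper's argument; what is genuinely different is that you propose to re-prove Fisk's lemma rather than cite it.

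Your inductive sketch of that lemma is sound and is in fact a natural way to prove it. The map you compute, $(f_0,\ldots,f_{r-1})\mapsto (af_0+xf_{r-1},\ af_1+f_0,\ \ldots,\ af_{r-1}+f_{r-2})$, is the correct effect of multiplying the underlying polynomial by $(x+a)$ on Veronese components. The point you flag in step~(2) — controlling the wrap-around term $af_0+xf_{r-1}$ — does resolve cleanly. Because all zeros of the $f_j$ are nonpositive, one has $f_j\preceq xf_{r-1}$ for every $j$, so the extended sequence $\left(f_{r-1},\ldots,f_1,f_0,xf_{r-1}\right)$ is itself interlacing (of length $r+1$). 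The passage to $\left(g_{r-1},\ldots,g_0\right)$ is then given by the $r\times(r+1)$ bidiagonal matrix with $a$ on the diagonal and $1$ on the superdiagonal, which is totally nonnegative; the standard fact that such matrices carry interlacing sequences to interlacing sequences (this is precisely the kind of ``compatible polynomials'' lemma you suggest invoking, and it appears in Fisk and in Br\"and\'en's survey \cite{Braenden2015Unimodality}) closes the gap without any further degree bookkeeping. Induction on $\deg h$ then gives the lemma.

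In short: your route and the paper's agree on the decisive idea, and differ only in that you unfold the black box. What you gain is a self-contained proof of the lemma; what the paper gains by citing Fisk is brevity. Your opening detour through Corollary~\ref{thm:deg_h<2r_log-concave} is a dead end (as you realized), but it does not affect the correctness of the argument you settle on.
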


\begin{proof}
It is known that if $h(x)$ is a polynomial with nonnegative coefficients and it has only negative real zeros, then the polynomial sequence $( h^{\langle r,r-1 \rangle}(x)$, $\ldots, h^{\langle r,1 \rangle}(x), h^{\langle r, 0 \rangle}(x) )$ is interlacing, see {\cite[Theorem 7.65]{Fisk2006Polynomials}}.
By Theorem \ref{thm:main-hnb}, we obtain the desired interlacing of the polynomial sequence $\left(U_{r,r-1}^{n}\,h(x),\ldots, U_{r,1}^{n}\,h(x), U_{r,0}^{n}\,h(x)\right)$.
\end{proof}

The remaining of this section is mainly concerned with
the application of Corollary \ref{thm:real-zeros} to the real-rootedness of some combinatorially defined polynomials related to Eulerian polynomials. To be self-contained, we first give an overview of these polynomials. The Eulerian polynomials are not only of interest in combinatorics, but also
of significance in geometry \cite{Bjoerner1984Some, Dolgachev1994character, Stanley1980number, Stembridge1992Eulerian}. Recall that the Eulerian polynomials $A_{n}(x)$ appear as $h$-polynomials in the following rational power series:
\begin{align}\label{eq:A}
 \sum_{i=0}^{\infty} (i+1)^{n}x^i = \frac{A_{n}(x)}{(1-x)^{n+1}}.
\end{align}
Let $\mathfrak{S}_n$ denote the set of permutations of $[n]=\{1,2,\ldots,n\}$. For a permutation  $\pi=\left( \pi(1),\pi(2), \ldots, \pi(n) \right) \in \mathfrak{S}_n$, the descent number of $\pi$, denoted $\des\,(\pi)$, is  the number of  $i$ ($1\le i\le n-1$) such that $\pi(i)>\pi(i+1)$. It is well known that the Eulerian polynomial $A_n(x)$ can be treated as the descent generating function of $\mathfrak{S}_n$, namely,
$$A_{n}(x) = \sum_{\pi \in \mathfrak{S}_n} x^{\des\,(\pi)}.$$
A remarkable property of these polynomials is that they have only real zeros.

There are various generalizations of Eulerian polynomials. Here we focus on two families of polynomials related to $A_{n}(x)$, one of which is the following refinement
\begin{align}\label{eq:A-ell}
A_n^{\langle \ell \rangle}(x) = \sum_{\pi  \in \mathfrak{S}_n \atop \pi(1)=\ell} x^{\des\, (\pi)},
\end{align}
and the other is the descent generating function over colored permutations.  The polynomials $A_n^{\langle \ell \rangle}(x)$
have been shown to be real-rooted by Brenti \cite{Brenti2008f}.
The notion of descents was generalized to colored permutations by Steingr{\'{\i}}msson  \cite{Steingrimsson1992Permutations,Steingrimsson1994Permutation}.
Let $\ZZ_r$ be the cyclic group of order $r$.
For a positive integer $r$, the wreath product $\ZZ_r \wr \mathfrak{S}_n$, can be considered as  the set of   $r$-colored permutations,
written as
$
\pi = (\pi(1)_{\xi_1}, \pi(2)_{\xi_2}, \ldots, \pi(n)_{\xi_n}),
$
 where $(\pi(1), \ldots, \pi(n) ) \in \mathfrak{S}_n$ and
$(\xi_1, \ldots, \xi_n) \in \{0,1,\ldots,r-1\}^n$.
The descent number of $\pi \in \mathbb{Z}_r \wr {\mathfrak S_n}$ is defined as
\begin{align*}
\des\, (\pi)  \ = \ |\{i \in [n] \mid \xi_i > \xi_{i+1} \  {\rm or} \  \xi_i = \xi_{i+1} \ {\rm and} \ \pi(i) > \pi(i+1)\}|,
\end{align*}
with the convention that $\pi(n+1)=n+1,\ \xi_{n+1}=0$.
This means that the colored letters are totally ordered as
$$1_0<\cdots<n_0<1_1<\cdots<n_1<\cdots<1_{r-1}<\cdots<n_{r-1}.$$
The Eulerian polynomial of $\mathbb{Z}_r \wr {\mathfrak S}_n$ is defined to be the descent generating polynomial of $\mathbb{Z}_r \wr {\mathfrak S}_n$, namely,
\begin{equation*}
G_{n,r}(x) =  \ \sum_{\pi \in \mathbb{Z}_r \wr {\mathfrak S}_n} x^{\des\, (\pi)}.
\end{equation*}
Analogous to $A_n^{\langle \ell \rangle}(x)$, one can introduce the following refinements of $G_{n,r}(x)$:
\begin{align*}
G^{\ell, -}_{n,r}(x)&=\sum_{\pi \in \ZZ_r \wr \mathfrak{S}_n \atop {\pi(1) = \ell}}x^{\des\, (\pi)},\\
G^{-, c}_{n,r}(x) &= \sum_{\pi \in \ZZ_r \wr \mathfrak{S}_n \atop \xi_1=c} x^{\des\, (\pi)},\\
G^{\ell, {c}}_{n,r}(x) &= \sum_{\pi \in \ZZ_r \wr \mathfrak{S}_n \atop {\pi(1) = \ell,  \xi_1= c}}x^{\des\, (\pi)},
\end{align*}
where $1\le \ell \le n$ and $0\le c \le r-1$ are two integers.
The real-rootedness of these refined polynomials has been studied by Savage and Visontai \cite{Savage2015s} from the view point of $\mathbf{s}$-inversion sequences. We shall show that their results can also be derived from Corollary \ref{thm:real-zeros}.
Precisely, we shall prove the following result.

\begin{cor}[{\cite[Section~3.3]{Savage2015s}}] \label{cor-refined-color}
Let $G_{n,r}(x), G^{\ell, -}_{n,r}(x), G^{-, c}_{n,r}(x)$ and $G^{\ell, {c}}_{n,r}(x)$ be defined as above. Then we have the following result.
\begin{itemize}
\item[(i)] The polynomial sequence
 $\left( G^{-,0}_{n,r}(x), G^{-,1}_{n,r}(x), \ldots, G^{-,r-1}_{n,r}(x)\right)$
 is interlacing.

\item[(ii)] The polynomial $G_{n,r}(x)$ has only real zeros.

\item[(iii)] The polynomial sequence
 $\left( G^{\ell, {0}}_{n,r}(x), G^{\ell, {1}}_{n,r}(x), \ldots, G^{\ell, {r-1}}_{n,r}(x)\right)$
 is interlacing.

\item[(iv)] The polynomial $G^{\ell, -}_{n,r}(x)$ has only real zeros.
\end{itemize}
\end{cor}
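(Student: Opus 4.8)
The plan is to deduce Corollary~\ref{cor-refined-color} from Corollary~\ref{thm:real-zeros} — equivalently, from Theorem~\ref{thm:main-hnb} — applied to the ordinary Eulerian polynomials, the new input being two Carlitz-type identities. First, it suffices to prove parts (i) and (iii): since $G_{n,r}(x)=\sum_{c=0}^{r-1}G^{-,c}_{n,r}(x)$ and $G^{\ell,-}_{n,r}(x)=\sum_{c=0}^{r-1}G^{\ell,c}_{n,r}(x)$, and the sum of an interlacing sequence of polynomials with positive leading coefficients has only real zeros (apply Lemma~\ref{lem:fg_r} with all $g_i\equiv1$), parts (ii) and (iv) follow at once from (i) and (iii). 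I would also use that $A_n(x)$ has only negative real zeros and, by Brenti's theorem \cite{Brenti2008f}, that each $A_n^{\langle\ell\rangle}(x)$ is real-rooted; all of these are polynomials with nonnegative coefficients.

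The crux is the following pair of identities, which I would establish in Section~\ref{sect-Carlitz} as consequences of a Carlitz identity for refined colored permutations: for all $1\le\ell\le n$ and $0\le c\le r-1$,
\begin{equation}\label{eq:refined-carlitz}
 G^{-,c}_{n,r}(x)=U^{n}_{r,\,r-1-c}\bigl(x^{r-1}A_n(x)\bigr),\qquad G^{\ell,c}_{n,r}(x)=U^{n}_{r,\,r-1-c}\bigl(x^{r-1}A_n^{\langle\ell\rangle}(x)\bigr).
\end{equation}
(These refine the classical identity $\sum_{t\ge0}(rt+1)^{n}x^{t}=G_{n,r}(x)/(1-x)^{n+1}$ according to the first value and the first color of a colored permutation; summing \eqref{eq:refined-carlitz} over $c$ recovers $G_{n,r}(x)=U^{n+1}_{r,0}A_n(x)$ after an elementary identity for the operator.) Granting \eqref{eq:refined-carlitz}, parts (i) and (iii) follow from Theorem~\ref{thm:main-hnb} applied to $h=x^{r-1}A_n(x)$ and to $h=x^{r-1}A_n^{\langle\ell\rangle}(x)$, respectively. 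Each such $h$ has nonnegative coefficients and only real, nonpositive zeros, so by \cite[Theorem 7.65]{Fisk2006Polynomials} — whose hypothesis of strictly negative zeros is removable by a standard continuity argument — the sequence $\bigl(h^{\langle r,r-1\rangle},\ldots,h^{\langle r,0\rangle}\bigr)$ is interlacing. Theorem~\ref{thm:main-hnb} then gives that $\bigl(U^{n}_{r,r-1}h,\ldots,U^{n}_{r,0}h\bigr)$ is interlacing, and by \eqref{eq:refined-carlitz} this sequence is exactly $\bigl(G^{-,0}_{n,r},\ldots,G^{-,r-1}_{n,r}\bigr)$ when $h=x^{r-1}A_n$ (proving (i)) and exactly $\bigl(G^{\ell,0}_{n,r},\ldots,G^{\ell,r-1}_{n,r}\bigr)$ when $h=x^{r-1}A_n^{\langle\ell\rangle}$ (proving (iii)). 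It is useful to note in passing that, writing $f$ for $A_n$ or $A_n^{\langle\ell\rangle}$, one has $(x^{r-1}f)^{\langle r,r-1\rangle}=f^{\langle r,0\rangle}$ and $(x^{r-1}f)^{\langle r,i\rangle}=x\,f^{\langle r,i+1\rangle}$ for $0\le i\le r-2$, so the $r$-decomposition of $x^{r-1}f$ is just a cyclic shift of that of $f$, twisted by multiplication by $x$; this makes the interlacing transparent, modulo the ``wrap-around'' relations $f^{\langle r,0\rangle}\sep x f^{\langle r,i\rangle}$.

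The \emph{main obstacle} is the proof of the two identities in \eqref{eq:refined-carlitz}; this is the substantive content to be carried out in Section~\ref{sect-Carlitz}. I would first prove a Carlitz-type identity for colored permutations that records the descent number together with the first value and the first color, and then read off $h^{*}$-polynomials from it using \eqref{eq:main} to obtain \eqref{eq:refined-carlitz}. Two routine points deserve attention: $A_n^{\langle\ell\rangle}(x)$ may vanish at the origin when $\ell\ge2$ (hence I phrase the interlacing input via real-rootedness with nonnegative coefficients rather than strictly negative zeros), and $\deg(x^{r-1}f)$ may exceed $n$ — which is harmless, since Theorem~\ref{thm:main-hnb} places no restriction on $\deg h$.
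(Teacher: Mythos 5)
Your proposal is correct and follows essentially the same route as the paper: the substantive input is the pair of Carlitz-type identities (the paper's Theorem~\ref{thm:c}, which your \eqref{eq:refined-carlitz} merely repackages by absorbing the factor $x^{r-1}$ into $h$), and the interlacing then comes from the Veronese machinery of Theorem~\ref{thm:main-hnb} applied to the Eulerian polynomials $A_n$ and $A_n^{\langle\ell\rangle}$, with (ii) and (iv) deduced from (i) and (iii) via Lemma~\ref{lem:fg_r}. The only cosmetic difference is that the paper applies Corollary~\ref{thm:real-zeros} directly to $h=A_n$ (resp.\ $A_n^{\langle\ell\rangle}$) and then performs the cyclic shift as a final step (using $g\sep xf\iff f\sep g$ for polynomials with nonpositive zeros), whereas you fold the shift into $h=x^{r-1}A_n$ from the start; both variants face the same small technicality of extending Fisk's Theorem~7.65 from negative to nonpositive zeros (needed in any case for $A_n^{\langle\ell\rangle}$ with $\ell\ge2$), which you handle more explicitly than the paper does.
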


We would like to point out that (ii) of Corollary \ref{cor-refined-color} was originally due to Steimgr\'{i}msson \cite[Theorem~3.19]{Steingrimsson1992Permutations}, and (i), (iii) and (iv) were implicitly stated by Savage and Visontai \cite{Savage2015s}.

In order to prove Corollary \ref{cor-refined-color} via Corollary \ref{thm:real-zeros}, we need to find suitable real-rooted polynomials $h(x)$ such that the polynomials $U_{r,i}^{n}\,h(x)$ correspond to  $G^{-,i}_{n,r}(x)$ or $G^{\ell,i}_{n,r}(x)$, respectively. It turns out that $A_{n}(x)$
and $A_{n}^{\langle \ell\rangle}(x)$ fulfill our purpose as shown below.

\begin{thm}\label{thm:c}
 For any positive integers $n$,  $r$ and $1 \le \ell \le n$,
 there holds
 \begin{align}
  (1 + x +  \cdots + x^{r-1})^n \, A_n (x) &=  G^{-,0}_{n,r}(x^r) + x \cdot \frac{G^{-,r-1}_{n,r}(x^r)}{x^r} + \cdots  + x^{r-1} \cdot \frac{G^{-,1}_{n,r}(x^r)}{x^r}, \label{eq:c}\\[6pt]
(1+x+\cdots+x^{r-1})^n A_{n}^{\langle \ell\rangle}(x)&
= G^{\ell, 0}_{n,r}(x^r) + x \cdot \frac{G^{\ell, {r-1}}_{n,r}(x^r)}{x^r} + \cdots + x^{r-1} \cdot \frac{G^{\ell, {1}}_{n,r}(x^r)}{x^r}.\label{eq:lc}
  \end{align}
\end{thm}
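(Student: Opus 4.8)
The plan is to prove both identities by a combinatorial argument based on a "carry" or "base-$r$" decomposition of colored permutations, reducing everything to the ordinary Eulerian polynomial via a bijective/generating-function identity. The key observation is that $(1+x+\cdots+x^{r-1})^n$ is the generating function $\sum_{\xi\in\{0,\dots,r-1\}^n} x^{\xi_1+\cdots+\xi_n}$ for the color vector, so the left-hand side of \eqref{eq:c} is $\sum_{\pi\in\mathbb{Z}_r\wr\mathfrak{S}_n} x^{(\xi_1+\cdots+\xi_n)}\,x^{\des(\sigma)}$ where $\sigma$ is the underlying uncolored permutation — \emph{provided} we can match this against an honest descent statistic on colored permutations. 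So the first step is to establish, for a colored permutation $\pi=(\sigma,\xi)$, an identity of the form $\des(\pi) = \des_0(\pi) + r\cdot(\text{something})$ or more precisely to track $\des(\pi)$ modulo the coloring. I would introduce the standard observation that each descent of $\pi$ is either a "color descent" (where $\xi_i>\xi_{i+1}$) or a "value descent within a color block" (where $\xi_i=\xi_{i+1}$ and $\sigma(i)>\sigma(i+1)$), and analyze how the color contribution $\xi_1+\cdots+\xi_n$ and the descent count combine when we expand in powers of $x$ and group by residues mod $r$.

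The cleanest route I would try: fix the uncolored permutation $\sigma\in\mathfrak{S}_n$ with $\des(\sigma)=d$, and sum over all $r^n$ colorings $\xi$. I want to show $\sum_{\xi} x^{\xi_1+\cdots+\xi_n + r\cdot\#\{\text{color descents of }(\sigma,\xi)\}} $... — but this isn't quite the colored descent number either. Instead, the honest statement to aim for is that $\des(\pi)$ as defined (with the convention $\pi(n+1)=n+1$, $\xi_{n+1}=0$) satisfies: when we write the flag-type statistic $\fdes$-style quantity $r\cdot\des(\pi) - (\xi_1+\cdots+\xi_n) + (\text{correction})$, it reduces to $\des(\sigma)$. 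This is essentially the content of Steingrímsson's work and of the Carlitz-type identity the paper promises to prove in Section \ref{sect-Carlitz}; so the honest approach is to prove the \emph{companion} statement $\sum_{i\ge 0}(ri+1)^n x^i$-type expansion. Concretely: start from $\sum_{i\ge 0}(i+1)^n x^i = A_n(x)/(1-x)^{n+1}$ from \eqref{eq:A}, apply the operator $U^{n+1}_{r,k}$, and use \eqref{eq:main} to see that $(1+x+\cdots+x^{r-1})^{n}A_n(x)$, after the $\langle r,k\rangle$-decomposition, has coefficients counting lattice-point-type data $(ri+k+1)^n$ — wait, degree bookkeeping: $A_n$ has degree $\le n$ and we multiply by degree $n(r-1)$, total $\le nr$, so the $\langle r,\cdot\rangle$ pieces each have degree $\le n$, matching $\deg G^{-,c}_{n,r}\le n$. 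So the real task is: show $U^{n+1}_{r,k}A_n(x)$ equals the appropriately indexed $G^{-,c}_{n,r}$, then unwind via \eqref{eq:main} to get \eqref{eq:c}, and similarly for the refinement to get \eqref{eq:lc}.

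So the backbone of the proof is: (1) by \eqref{eq:A}, $\sum_{i\ge0}(i+1)^n x^i=\frac{A_n(x)}{(1-x)^{n+1}}$; (2) extract the subsequence in arithmetic progression: $\sum_{i\ge0}(ri+k+1)^n x^i = \frac{U^{n+1}_{r,k}A_n(x)}{(1-x)^{n+1}}$ (this is the defining property of $U$, applied with dimension parameter $n+1$); (3) give a combinatorial model showing $\sum_{i\ge0}(ri+k+1)^n x^i = \frac{G^{?}_{n,r}(x)}{(1-x)^{n+1}}$ for the appropriate refinement, using the standard fact that $(m)^n$ counts functions $[n]\to[m]$ and an Eulerian-type "barred" or "ball-in-box" argument keyed to the color statistic — this is exactly where Steingrímsson's combinatorics or an $\mathbf{s}$-inversion-sequence argument à la Savage–Visontai enters; (4) combine (2),(3) to identify $U^{n+1}_{r,k}A_n(x)$ with the relevant $G^{-,c}_{n,r}$ (with an index shift $c\equiv -k$ and a factor $x$ accounting for the $1\le c\le r-1$ versus $c=0$ split, which explains the $G^{-,r-1}_{n,r}(x^r)/x^r$ shape on the right of \eqref{eq:c}); (5) plug back into \eqref{eq:main} to recover \eqref{eq:c}; (6) repeat with $A_n^{\langle\ell\rangle}(x)$ in place of $A_n(x)$, where the first-letter condition $\pi(1)=\ell$ is preserved throughout the arithmetic-progression extraction, yielding \eqref{eq:lc}.

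The main obstacle will be step (3) — pinning down precisely which refinement $G^{-,c}_{n,r}$ corresponds to which residue class $k$, including the exact index shift and the role of the boundary convention $\xi_{n+1}=0$, since these determine whether the term is $x^{r-1}\cdot G^{-,1}_{n,r}(x^r)/x^r$ or $x^{r-1}\cdot G^{-,r-1}_{n,r}(x^r)/x^r$ on the right-hand side. I expect the bookkeeping to force the "reversed" indexing visible in the statement, and I would verify it by checking small cases ($n=1$, general $r$) to fix the convention before writing the general argument. The division by $x^r$ is a genuine feature, not a typo: it reflects that a colored permutation with $\xi_1=c\ge 1$ always has a descent "absorbed" at the appropriate place, so $G^{-,c}_{n,r}(x)$ for $c\ge1$ is divisible by $x$; this divisibility claim is itself a small lemma worth isolating. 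Once the indexing is nailed down, steps (1)–(2), (4)–(5) are formal, and step (6) is a routine adaptation since all operations respect the linear condition $\pi(1)=\ell$.
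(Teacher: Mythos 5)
Your high-level plan — translate the left side to residues of the power series $\sum (i+1)^n x^i$ via the Veronese operator, then match those residue classes against a Carlitz-type generating function for the colored refinements — is the same strategy the paper uses. But two concrete issues keep the proposal from going through as written.

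First, the index discrepancy $n$ vs.\ $n+1$ is not a cosmetic slip; it blocks the "unwinding" in step (5). The theorem's left side is $(1+x+\cdots+x^{r-1})^n A_n(x)$, so by \eqref{eq:main} its $\langle r,k\rangle$ pieces are $U^n_{r,k}A_n(x)$, not $U^{n+1}_{r,k}A_n(x)$. Correspondingly the right place to work is the $(1-x)^n$ level: start from
\[
\frac{A_n(x)}{(1-x)^n}=\sum_{j\ge 0}\bigl((j+1)^n-j^n\bigr)x^j,
\]
so that $U^n_{r,k}A_n(x)/(1-x)^n=\sum_i\bigl((ri+k+1)^n-(ri+k)^n\bigr)x^i$. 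The paper does precisely this and then compares with the identity
\[
\frac{G^{-,c}_{n,r}(x)}{(1-x)^n}=\delta_{c,0}+\sum_{i\ge 1}\bigl((ri-c+1)^n-(ri-c)^n\bigr)x^i,
\]
which is a \emph{difference} of $n$-th powers. Your step (3), by contrast, claims an identity of the shape $\sum_i(ri+k+1)^n x^i = G^{?}_{n,r}(x)/(1-x)^{n+1}$ for an "appropriate refinement" $G^{?}$. This is true only for $k=0$ with the full polynomial $G_{n,r}$ (the $q=1$ specialization of Chow--Mansour's identity \eqref{eq:CM}); for $1\le k\le r-1$ the quantities $G^{-,c}_{n,r}(x)/(1-x)^{n+1}$ are cumulative sums of differences that do \emph{not} telescope to a single $n$-th power, so there is no refinement $G^{?}$ of the advertised form. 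The matching must be done with differences at the $(1-x)^n$ level, not at the $(1-x)^{n+1}$ level.

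Second, even granting the corrected bookkeeping, the central combinatorial input — the paper's Theorem \ref{thm:Carlitz}, or at least its $q=1$ form \eqref{eq:lc-key}, established by a balls-in-boxes argument — is exactly the step you acknowledge as "the main obstacle" and do not carry out. That identity is not a routine reshuffling of Steingr\'imsson's or Savage--Visontai's work: it is the refinement (by first letter $\ell$ and first color $c$) whose proof is the real content of Section~\ref{sect-Carlitz}. Without it, steps (4)--(5) have nothing to plug in. Your observation that $G^{-,c}_{n,r}(x)$ is divisible by $x$ for $c\ge 1$ is correct and a useful sanity check on the indexing, but it does not substitute for the refined Carlitz identity. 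In short: right architecture, wrong denominator degree in the pivotal identification, and the key lemma is posited rather than proved.
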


In order not  to disrupt the exposition of the applications of our main result, we defer the proof of Theorem \ref{thm:c} to Section
\ref{sect-Carlitz}.
Note that the left hand side of \eqref{eq:c} is the generating function of colored permutations of flag descent statistic, see \cite{Athanasiadis2014Edgewise, Bagno2007Colored}.
The $r=2$ case of \eqref{eq:c} was discussed in terms of half Eulerian polynomials, see \cite{Yang2015Real}.

We proceed to prove Corollary \ref{cor-refined-color}.

\begin{proof}[Proof of Corollary \ref{cor-refined-color}]
 We first prove (i). It is easy to check that the constant term of the polynomial $G^{-,i}_{n,r}(x)$ vanishes for each $1\leq i\leq r-1$. In view of the identity \eqref{eq:c} and the real-rootedness of $A_n(x)$, from Corollary \ref{thm:real-zeros} we derive the interlacing of the polynomial sequence $$\left( \frac{G^{-,1}_{n,r}(x)}{x}, \ldots, \frac{G^{-,r-1}_{n,r}(x)}{x}, G^{-,0}_{n,r}(x) \right).$$
 Multiplying each polynomial of the above sequence by $x$,  we get that the sequence $$\left( G^{-,1}_{n,r}(x), \ldots, G^{-,r-1}_{n,r}(x), x G^{-,0}_{n,r}(x) \right)$$ is interlacing.
 Since  $G^{-,i}_{n,r}(x) \sep x G^{-,0}_{n,r}(x)$ for all $1\le i \le r-1$, we have $G^{-,0}_{n,r}(x) \sep G^{-,i}_{n,r}(x)$. Therefore, the sequence $$\left( G^{-,0}_{n,r}(x), G^{-,1}_{n,r}(x), \ldots, G^{-,r-1}_{n,r}(x)\right)$$ is interlacing.
This completes the proof of (i).

We continue to prove (ii).  Taking
  \begin{align*}
  \left(f_{1}(x), f_{2}(x), \ldots, f_{r}(x)\right)&=(1,1,\ldots,1),\\
  \left(g_{1}(x), g_{2}(x), \ldots, g_{r}(x)\right)&=\left( G^{-,0}_{n,r}(x), G^{-,1}_{n,r}(x), \ldots, G^{-,r-1}_{n,r}(x)\right)
  \end{align*}
  in Lemma \ref{lem:fg_r},
  we obtain that the polynomial
  $$G^{\ell, -}_{n,r}(x) = \sum_{i=0}^{r-1} G^{\ell, i}_{n,r}(x)$$
   has only real zeros.

The proofs of (iii) and (iv) are exactly the same as those of (i) and (ii), which will be omitted here.
\end{proof}

\section{Proof of Theorem \ref{thm:c}}\label{sect-Carlitz}

The main aim of this section is to prove Theorem \ref{thm:c}.
To this end, we establish a Carlitz identity for refined  colored  permutations. Before presenting our identity, let us first review some related background.

A generalization of the classical Euler's identity \eqref{eq:A} was known before Carlitz \cite{Carlitz1975combinatorial}.
Let $[i]_q=1+q+\cdots+q^{i-1}$.
For any positive integer $n$, we define the $q$-factorial by $(x;q)_{n}=\prod_{i=0}^{n-1}(1-xq^i)$ with $(x;q)_{0}=1$.
Define the major index of $\pi$, denoted $\maj\, (\pi)$, to be the sum of the position $i$ ($1\le i\le n-1$) which contributes to its descent number, namely, $\pi(i)>\pi(i+1)$.
The following identity is known as Carlitz identity \cite{Carlitz1975combinatorial}:
\begin{align}\label{eq:A-q}
\sum_{i=0}^{\infty} [i+1]_q^n x^i = \frac{\sum_{\pi \in \mathfrak{S}_n} x^{\des\, (\pi)} q^{\maj\, (\pi)}}{(x;q)_{n+1}}.
\end{align}

An analogue of Carlitz identity for  colored permutations was given by Chow and Mansour  \cite{Chow2011Carlitz}.
The major index  of a colored permutation $
\pi = (\pi(1)_{\xi_1}, \pi(2)_{\xi_2}, \ldots, \pi(n)_{\xi_n})
$, denoted $\maj\, (\pi)$, is defined as the sum of descent positions $i$,
and the flag major index of  $\pi$ is defined as
$\fmaj\, (\pi) = r \cdot \maj\,(\pi)-\sum_{i=1}^{n}\xi_{i}.$
Chow and Mansour  \cite{Chow2011Carlitz} obtained the following identity:
\begin{align} \label{eq:CM}
\sum_{i=0}^{\infty} [r i+1]_q^n x^i = \frac{\sum_{\pi \in \mathbb{Z}_r \wr {\mathfrak S}_n} x^{\des\, (\pi)} q^{\fmaj\, (\pi)}}{(x;q^r)_{n+1}}.
\end{align}
In particular, the $r=2$ case of \eqref{eq:CM} was already known to Chow and Gessel \cite{Chow2007descent}.

For $1\le \ell \le n$ and $0\le c \le r-1$, let
\begin{align*}
 G^{\ell, {c}}_{n,r}(x,q) = \sum_{\pi \in \ZZ_r \wr \mathfrak{S}_n \atop {\pi(1) = \ell,  \xi_1= c}} x^{\des\, (\pi)}q^{\fmaj\, (\pi)}.
\end{align*}
Clearly, $ G^{\ell, {c}}_{n,r}(x) =   G^{\ell, {c}}_{n,r}(x,1)$.
The first main result of this section is the following refinement of \eqref{eq:CM}.

\begin{thm}\label{thm:Carlitz}
  For $1\le \ell \le n$ and $0\le c \le r-1$, there holds
\begin{equation}\label{eq:refCarlitz}
  \frac{ G^{\ell, {c}}_{n,r}(x,q)}{(xq^r;q^r)_{n}} = \delta_{\ell,1}\delta_{c,0} + \sum_{i=1}^{\infty} \left( [r i-c]_{q}^{\ell-1} [r i-c+1]_{q}^{n-\ell}  q^{ri-c}\right)  x^i.
\end{equation}
\end{thm}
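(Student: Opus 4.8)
The plan is to prove the refined Carlitz identity \eqref{eq:refCarlitz} by a direct combinatorial/bijective argument, modeling it on the standard proof of the original Carlitz identity \eqref{eq:A-q} via the ``$P$-partition'' or ``barred permutation'' technique. Concretely, I would interpret the coefficient of $x^i$ on the left-hand side, after multiplying through by the factor $(xq^r;q^r)_n^{-1}$, as counting pairs consisting of a colored permutation $\pi \in \ZZ_r \wr \mathfrak{S}_n$ with $\pi(1) = \ell$ and $\xi_1 = c$, together with a weakly decreasing nonnegative integer sequence encoding the ``overflow'' in each descent block, tracked by $x$ and $q$. The key identity to invoke is the elementary expansion
\begin{align*}
 \frac{x^{\des(\pi)}q^{\fmaj(\pi)}}{(xq^r;q^r)_n} = \sum_{i \ge 0} x^i \, q^{(\text{something})},
\end{align*}
which follows from the fact that $\frac{1}{(xq^r;q^r)_n} = \sum_{j\ge 0} x^j \genfrac{[}{]}{0pt}{}{n+j-1}{j}_{q^r} q^{rj}$ records a choice of a partition with at most $n$ parts, scaled by $r$.

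First I would set up the bijective framework: to each $\pi$ with the prescribed first coordinate $(\ell,c)$, associate the set of words $w = (w_1,\dots,w_n)$ over $\{1_0,\dots,n_{r-1}\}$ that ``refine'' $\pi$ in the sense that $w$ is order-isomorphic to $\pi$ and is weakly decreasing/increasing across non-descent/descent positions in the appropriate way; summing $x^{\lceil \cdot \rceil}q^{\cdot}$ over all such $w$ produces exactly one term $[ri-c]_q^{\ell-1}[ri-c+1]_q^{n-\ell}q^{ri-c}$ when the ``top value'' is pinned. The factor $[ri-c]_q^{\ell-1}[ri-c+1]_q^{n-\ell}$ is the signature that the first letter is fixed at the $\ell$-th smallest value with color $c$: the $\ell-1$ positions whose value is constrained to lie strictly below $\pi(1)$ contribute $[ri-c]_q$ each, and the $n-\ell$ positions that may equal or exceed it contribute $[ri-c+1]_q$ each; the extra $q^{ri-c}$ accounts for the weight of the fixed first letter itself. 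The Kronecker-delta term $\delta_{\ell,1}\delta_{c,0}$ is the $i=0$ contribution, corresponding to the empty/degenerate configuration which only survives when the mandatory first letter is the global minimum $1_0$.

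The main obstacle, I expect, will be getting the color bookkeeping exactly right — in particular reconciling the flag major index $\fmaj(\pi) = r\cdot\maj(\pi) - \sum \xi_i$ with the powers of $q$ that emerge naturally from the $q^r$-Pochhammer denominator and from the linear order $1_0 < \cdots < n_0 < 1_1 < \cdots < n_{r-1}$. The cleanest route is probably to linearize: encode a colored letter $j_\xi$ as the integer $(n)\xi + j$ or, better, to use the ``$r$-th root'' substitution $x \mapsto x^r$ implicit in \eqref{eq:CM} and peel off one residue class at a time, so that fixing $\xi_1 = c$ amounts to extracting the coefficients of exponents congruent to $-c \pmod r$. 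I would first verify the $\ell$-unrefined, $c$-only statement (summing over $\ell$ should recover a residue-class slice of \eqref{eq:CM}), then refine by $\ell$ using the standard observation that fixing the value of $\pi(1)$ in an Eulerian-type sum splits the remaining letters into those below and those at-or-above it — exactly the mechanism behind Brenti's recurrence for $A_n^{\langle \ell\rangle}(x)$. Once \eqref{eq:refCarlitz} is established, I would set $q=1$ to obtain a product formula for $G^{\ell,c}_{n,r}(x)$, and then Theorem \ref{thm:c} should follow by summing the resulting rational identities over $c$ and recognizing $(1+x+\cdots+x^{r-1})^n A_n^{\langle\ell\rangle}(x)$ and $(1+x+\cdots+x^{r-1})^n A_n(x)$ on the left via \eqref{eq:A} and its refinement, after the change of variables matching the $\langle r,k\rangle$-decomposition.
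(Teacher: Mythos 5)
Your proposal identifies the right framework: this is indeed a barred colored permutation / balls-in-boxes double count, and the paper's proof proceeds exactly along those lines. The left side of \eqref{eq:refCarlitz} is obtained by weighting bars by $xq^i$ at position $i$ and letters of color $j$ by $z_j$, placing mandatory bars at descents and arbitrary extra bars everywhere except position $0$, and then specializing $q\mapsto q^r$, $z_j\mapsto q^{-j}$; the right side is obtained by re-weighting the same objects starting from $i$ bars after the fixed first letter $\ell_c$ and factoring the contribution $\Omega(i,q,z_0,\dots,z_{r-1})=\prod_{j}\Omega_j$ over the letters $j\neq\ell$. So the approach matches.

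However, your proposal stops precisely where the actual proof content lives. You write that the ``main obstacle'' will be ``getting the color bookkeeping exactly right,'' and that obstacle is, in fact, the entire technical heart of the theorem. The factor $[ri-c]_q$ for each $j<\ell$ and $[ri-c+1]_q$ for each $j>\ell$ is not a single clean constraint of the form ``this position lies strictly below $\pi(1)$''; a letter $j<\ell$ assigned color $k>c$ is actually \emph{above} $\ell_c$ in the colored order and \emph{can} occupy compartment $i$. What happens is that, for fixed $j\neq\ell$, one must sum over all $r$ colors $k$, and for each color decide which compartments are forbidden (compartment $0$ is forbidden for $k\neq 0$; compartment $i$ is forbidden exactly when $j_k<\ell_c$, i.e., when $k<c$, or $k=c$ and $j<\ell$). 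This produces expressions such as $[i]_q z_0+q[i-1]_q\sum_{k=1}^{c}z_k+q[i]_q\sum_{k=c+1}^{r-1}z_k$, which collapse to $[ri-c]_q$ or $[ri-c+1]_q$ only after the substitution $q\mapsto q^r$, $z_k\mapsto q^{-k}$ and a telescoping of residues modulo $r$. None of this is derived in your sketch; it is merely asserted that the factors ``should'' appear. Your alternative suggestion—first prove the $c$-slice of \eqref{eq:CM} by summing over $\ell$, then ``refine by $\ell$''—also does not constitute a reduction: recovering the $(\ell,c)$-refined identity from its $\ell$-sum requires exactly the same position-by-position analysis you are trying to avoid (there is no direct unsummation), so the appeal to Brenti's recurrence is heuristic rather than a proof step. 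In short, the skeleton is right, but the case analysis over colors and compartments, which is the proof, is missing.
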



Our proof of Theorem \ref{thm:Carlitz} adopts the method of ``balls in boxes''. For more information about this method, see \cite{Lin2015descent, Moynihan2012Flag, Petersen2013Two}.
Before giving the proof, let us first introduce some notations and definitions. Given $\pi \ = \
(\pi(1)_{\xi_1}, \pi(2)_{\xi_2}, \ldots, \pi(n)_{\xi_n}) \in \ZZ_r \wr \mathfrak{S}_n$, there are $n+1$ positions between the letters of $(\pi)$ and on the ends. These positions are labelled $0,1,\ldots,n$ from left to right with position $i$
between $\pi(i)_{\xi_{i}}$  and $\pi(i+1)_{\xi_{i+1}}$.
Let $N_{j}(\pi)$ be the number of letters of $\pi$ with color $j$.
Define a barred colored permutation
to be a shuffle of a colored permutation with a sequence of bars
such that between any two bars the letters are increasing
and such that only letters of color $0$ are allowed in the rightmost compartment. For $1\le \ell \le n$ and $0\le c \le r-1$, let $\mathcal{M}_{\ell,c}$ denote the set of all barred colored permutations beginning with $\ell_c$, which implies that there exist no bars at position $0$.

\begin{proof}[Proof of Theorem \ref{thm:Carlitz}]
We shall prove the theorem by counting weighted barred colored permutations of $\mathcal{M}_{\ell,c}$
in two different ways. 

Beginning with a colored permutation $\pi \in \ZZ_r \wr \mathfrak{S}_n$ with $\pi(1) = \ell,  \xi_1= c$, we must  place a bar in each descent position.
We weight each bar in position $i$ by $x q^{i}$ and each letter with color $j$ by $z_{j}$. Define the weight of a barred colored permutation to be the product of weights of bars  and  letters. These bars placed in descent positions contribute a factor of $x^{\des\, (\pi)}q^{\maj\, (\pi)}$.
Next, we place arbitrary many bars in any positions but no bars in position $0$, and the sum of the products of their possible weights is
$$\prod_{i=1}^{n} \left( 1+xq^i+(xq^i)^2+\cdots \right)
= \frac{1}{(xq;q)_{n}}.$$
Hence $\pi$ totally contributes
 $$\frac{x^{\des\, (\pi)}q^{\maj\, (\pi)}  \prod_{j=0}^{r-1}z_{j}^{N_{j}(\pi)}}{(xq;q)_{n}}$$
 to the sum of the weights of all barred colored permutations of $\mathcal{M}_{\ell,c}$. Summing over all $\pi\in \ZZ_r \wr \mathfrak{S}_n$ with $\pi(1) = \ell,  \xi_1= c$, we have
 \begin{align*}
  \frac{ \sum_{\pi \in \ZZ_r \wr \mathfrak{S}_n \atop {\pi(1) = \ell,  \xi_1= c}} x^{\des\, (\pi)}q^{\maj\, (\pi)} \prod_{j=0}^{r-1}z_{j}^{N_{j}(\pi)}}{(xq;q)_{n}}.
 \end{align*}
If we substitute $q$ by $q^{r}$ and $z_{j}$
 by $q^{-j}$ for $0\le j\le r-1$, then the above expression turns out to be
 \begin{align*}
  \frac{ \sum_{\pi \in \ZZ_r \wr \mathfrak{S}_n \atop {\pi(1) = \ell,  \xi_1= c}} x^{\des\, (\pi)}q^{\fmaj\, (\pi)}}{(xq^r;q^r)_{n}}=\frac{ G^{\ell, {c}}_{n,r}(x,q)}{(xq^r;q^r)_{n}}.
 \end{align*}

On the other hand,  we count weighted barred colored permutations of $\mathcal{M}_{\ell,c}$ but instead start with $i$ bars after $ \ell_{c}$. With $i$ initial bars given, there are $i+1$ compartments with the leftmost compartment beginning with  $\ell_{c}$.
We label the compartments $0,1,\ldots, i$ from right to left. Now we weight each bar by $x$ and each letter with color $j$ in compartment $k$ by $q^{k}z_{j}$. It is straightforward to verify that the weight of a barred colored permutation is still the product of weights of bars and letters of $\pi$, namely, these two different weighting methods agree for every barred colored permutation. 

Let $\Omega(i,q,z_0,\ldots,z_{r-1})$ denote the sum of the products of the weights of the letters over all barred colored permutations of $\mathcal{M}_{\ell,c}$ with initial $i$ bars.
Note that, with $i$ bars given, to create a barred colored permutation in $\mathcal{M}_{\ell,c}$, we must choose a color and a compartment for each element $j\in [n] \backslash \ell$ (for which we choose color $c$ and compartment $i$) such that
$\ell_c$ is the smallest element in compartment $i$. Given such a choice, there is a unique way to place all the colored letters in the same compartment in increasing order.
This implies that the necessary choices for each $j\neq \ell$ are independent subject to certain conditions, and so $\Omega(i,q,z_0,\ldots,z_{r-1})$ factors
as
$$\Omega(i,q,z_0,\ldots,z_{r-1})=\prod_{j=1}^n \Omega_j(i,q,z_0,\ldots,z_{r-1}),$$
where $\Omega_j(i,q,z_0,\ldots,z_{r-1})$ denotes the weight contributed by all possible choices of the letter $j$.

Since the color of $\ell$ is fixed to be $c$ and the compartment of $\ell$ is fixed to be $i$, we have $\Omega_\ell(i,q,z_0,\ldots,z_{r-1})=q^iz_c$. To compute
$\Omega_j(i,q,z_0,\ldots,z_{r-1})$ for $j\neq \ell$, we consider separately the cases $c=0$ and $c\neq 0$ in view of the requirement that the rightmost compartment contain only letters colored  $0$.

First we consider the case of $c=0$. If $j<\ell$, we consider its contribution according to its color. If $j$ is colored $0$, then it can be placed in any compartment but compartment $i$, hence it contributes $(1+q+\cdots+q^{i-1})z_0=[i]_{q}z_{0}$ to $\Omega_j(i,q,z_0,\ldots,z_{r-1})$. If $j$ is colored $k$ ($1\le k \le r-1$), then it can be placed in any compartment but compartment $0$ (keeping in mind that only letters of color $0$ are allowed in compartment $0$) and hence it contributes
$(q+q^2+\cdots+q^i)z_k=q [i]_{q}z_{k}$ to 
$\Omega_j(i,q,z_0,\ldots,z_{r-1})$. 
Therefore, we have 
$$\Omega_j(i,q,z_0,\ldots,z_{r-1})=[i]_{q}z_{0} + q [i]_{q}\sum_{k=1}^{r-1}z_{k}.$$
Similarly, for any $j>\ell$
 $$\Omega_j(i,q,z_0,\ldots,z_{r-1})=[i+1]_{q}z_{0} + q [i]_{q}\sum_{k=1}^{r-1}z_{k}.$$
Therefore, 
\begin{align}
\Omega(i,q,z_0,\ldots,z_{r-1})= & q^i z_c \left([i]_{q}z_{0} + q [i]_{q}\sum_{k=1}^{r-1}z_{k}\right)^{\ell-1}
 \left([i+1]_{q}z_{0} + q [i]_{q}\sum_{k=1}^{r-1}z_{k}\right)^{n-\ell}.\label{eq-omega1}
\end{align}

We proceed to consider the case of $c>0$. For $j<\ell$, we consider its contribution according to its color. If $j$ is colored $0$, then it can be placed in any compartment but compartment $i$, hence it contributes $(1+q+\cdots+q^{i-1})z_0=[i]_{q}z_{0}$ to $\Omega_j(i,q,z_0,\ldots,z_{r-1})$. 
If $j$ is colored $k$ with $1\le k \le c-1$, then it can be placed in any compartment except for compartment $0$ and compartment $i$,  and hence it contributes
$(q+q^2+\cdots+q^{i-1})z_k=q [i-1]_{q}z_{k}$ to
$\Omega_j(i,q,z_0,\ldots,z_{r-1})$. 
If $j$ is colored $k$ with $k>c$, then it can be placed in any compartments except for compartment $0$,  and hence it contributes
$(q+q^2+\cdots+q^{i})z_k=q [i]_{q}z_{k}$ to
$\Omega_j(i,q,z_0,\ldots,z_{r-1})$. Therefore, for $j<\ell$ we have
$$\Omega_j(i,q,z_0,\ldots,z_{r-1})=[i]_{q}z_{0}+q[i-1]_{q}\sum_{k=1}^{c-1}z_{k}+q[i]_{q}\sum_{k=c}^{r-1}z_{k}.$$
Similarly, for any $j>\ell$ we have
 $$\Omega_j(i,q,z_0,\ldots,z_{r-1})=[i]_{q}z_{0}+q[i-1]_{q}\sum_{k=1}^{c}z_{k}+q[i]_{q}\sum_{k=c+1}^{r-1}z_{k}.$$
To summarize, we get
\begin{align}
\Omega(i,q,z_0,\ldots,z_{r-1})&= q^i z_c \left([i]_{q}z_{0}+q[i-1]_{q}\sum_{k=1}^{c-1}z_{k}+q[i]_{q}\sum_{k=c}^{r-1}z_{k}\right)^{\ell-1}\nonumber\\
&\quad \times   \left([i]_{q}z_{0}+q[i-1]_{q}\sum_{k=1}^{c}z_{k}+q[i]_{q}\sum_{k=c+1}^{r-1}z_{k}\right)^{n-\ell} .\label{eq-omega2}
\end{align}
Substituting $q$ by $q^{r}$ and $z_{k}$
 by $q^{-k}$ for $0\le k\le r-1$, it is tedious to verify that both \eqref{eq-omega1} and
 \eqref{eq-omega2} become
 \begin{align*}
 \Omega(i,q,z_0,\ldots,z_{r-1})|_{{{q \rightarrow q^r}\atop{z_k \rightarrow q^{-k}}}} = q^{ri-c} [r i-c]_{q}^{\ell-1}  [r i-c+1]_{q}^{n-\ell}.
 \end{align*}
Then multiplying by $x^i$ and summing over $i$, it follows that 
 \begin{equation*}
  \frac{ G^{\ell, {c}}_{n,r}(x,q)}{(xq^r;q^r)_{n}} = \delta_{\ell,1}\delta_{c,0} + \sum_{i=1}^{\infty} \left( [r i-c]_{q}^{\ell-1} [r i-c+1]_{q}^{n-\ell}  q^{ri-c}\right)  x^i,
\end{equation*}
as desired. 
This completes the proof.
 \end{proof}


We proceed to prove Theorem \ref{thm:c}.
\begin{proof}[Proof of Theorem \ref{thm:c}]
We first prove the identity \eqref{eq:c}.
By letting $q$ be 1,  \eqref{eq:refCarlitz} becomes
\begin{align}\label{eq:lc-key}
 \frac{G^{\ell, {c}}_{n,r}(x)}{(1-x)^n} = \delta_{\ell,1}\delta_{c,0}+\sum_{i=1}^{\infty}(r i-c)^{\ell-1}(r i-c+1)^{n-\ell} x^i.
\end{align}
Summing over $\ell$ from $1$ to $n$ in \eqref{eq:lc-key}, we get
\begin{align*}
\frac{G^{-, c}_{n,r}(x)}{(1-x)^n} & = \frac{\sum_{\ell=1}^{n}G^{\ell, {c}}_{n,r}(x)}{(1-x)^n}=
\delta_{c,0} +  \sum_{i=1}^{\infty} \sum_{\ell=1}^{n} (r i-c)^{\ell-1}(r i-c+1)^{n-\ell}  x^i.
\end{align*}
By the geometric summation formula, we have
\begin{align}\label{eq-temp}
\frac{G^{-, c}_{n,r}(x)}{(1-x)^n} & =  \delta_{c,0} +  \sum_{i=1}^{\infty} \left( (ri-c+1)^{n}-(ri-c)^{n}\right) x^i.
\end{align}
On the other hand, by \eqref{eq:A} we obtain
\begin{align*}
 \frac{A_{n}(x)}{(1-x)^{n}} = (1-x) \sum_{j=0}^{\infty} (j+1)^{n}x^j
  =  \sum_{j=0}^{\infty} ((j+1)^{n}-j^n)x^j.
\end{align*}
Decomposing the above summation into $r$ parts by the residue classes of $j$ modulo $r$, it follows that
\begin{align*}
\frac{A_{n}(x)}{(1-x)^{n}}  & = 1 +  \sum_{c=0}^{r-1} \sum_{i=1}^{\infty} \left( (ri-c+1)^{n}-(ri-c)^{n}\right) x^{ri-c}.
\end{align*}
Then by \eqref{eq-temp} we get
\begin{align*}
\frac{A_{n}(x)}{(1-x)^{n}}  & = \sum_{c=0}^{r-1} \frac{G^{-, c}_{n,r}(x^r)}{x^c\, (1-x^r)^n}.
\end{align*}
Multiplying both sides of the above formula by $(1-x^r)^n$ leads to
\begin{align*}
(1+x+\cdots+x^{r-1})^n A_{n}(x) =  \frac{G^{-, {r-1}}_{n,r}(x^r)}{x^{r-1}} + \cdots +  \frac{G^{-, {1}}_{n,r}(x^r)}{x} + G^{-, 0}_{n,r}(x^r),
\end{align*}
as desired.
This completes the proof of  \eqref{eq:c}.

We proceed to the proof of \eqref{eq:lc}, in the same way as that of \eqref{eq:c}.
For $0\le i\le r-1$, we have
$A_{n}^{\langle \ell\rangle}(x) = G^{\ell, 0}_{n,1}(x)$. Hence by \eqref{eq:lc-key} we get
\begin{align*}
 \frac{A_n^{\langle \ell\rangle}(x)}{(1-x)^n} = \delta_{\ell,1} + \sum_{j=1}^{\infty} j^{\ell-1}(j+1)^{n-\ell}x^j.
\end{align*}
Decomposing the summation on the right hand side into $r$ parts by the residue classes of $j$ modulo $r$, we get
\begin{align*}
 \frac{A_n^{\langle \ell\rangle}(x)}{(1-x)^n}= \delta_{\ell,1} +
 \sum_{c=0}^{r-1}\sum_{i=1}^{\infty} (ri-c)^{\ell-1}(ri-c+1)^{n-\ell}x^{ri-c}
\end{align*}
Then by \eqref{eq:lc-key} we have
\begin{align*}
\frac{A_n^{\langle \ell\rangle}(x)}{(1-x)^n}  & = \sum_{c=0}^{r-1} \frac{G^{\ell, c}_{n,r}(x^r)}{x^c\, (1-x^r)^n}.
\end{align*}
Multiplying the both sides of the above formula by $(1-x^r)^n$ leads to
\begin{align*}
(1+x+\cdots+x^{r-1})^n A_{n}^{\langle \ell\rangle}(x)
  & = \frac{G^{\ell, {r-1}}_{n,r}(x^r)}{x^{r-1}} + \cdots + \frac{G^{\ell, {1}}_{n,r}(x^r)}{x}+ G^{\ell, 0}_{n,r}(x^r)
  \end{align*}
as desired. This completes the proof.
\end{proof}

\vskip 3mm
\noindent {\bf Acknowledgments.}
We would like to thank the referee for the valuable comments which helped to improve the manuscript.
This work was partially supported by the NSFC  (grant 11626172, 11701424)  and the PHD Program 52XB1616 of Tianjin Normal University.


\end{document}